\let\oldmarginpar\marginpar
\renewcommand\marginpar[1]{\-\oldmarginpar[\raggedleft\footnotesize #1]%
{\raggedright\footnotesize #1}}
\newtheorem{theorem}{Theorem}[section]
\newtheorem{lemma}[theorem]{Lemma}
\newtheorem{corollary}[theorem]{Corollary}
\newtheorem{proposition}[theorem]{Proposition}
\theoremstyle{definition}
\newtheorem{definition}[theorem]{Definition}
\newtheorem{example}[theorem]{Example}
\theoremstyle{remark}
\newtheorem{remark}[theorem]{Remark}
\numberwithin{equation}{section}
\def\Z{\mathbb Z}
\def\tnu{\widetilde{\nu}}
\def\F{\mathcal F}
\DeclareMathOperator{\Sym}{Sym}
\DeclareMathOperator{\SL}{SL}
\DeclareMathOperator{\Alt}{Alt}
\DeclareMathOperator{\Hom}{Hom}
\DeclareMathOperator{\Irr}{Irr}
\DeclareMathOperator{\tr}{tr}
\DeclareMathOperator{\Id}{Id}
\DeclareMathOperator{\Rep}{Rep}
\DeclareMathOperator{\Aut}{Aut}
\begin{document}
\title{Twisted Frobenius--Schur Indicators for Hopf Algebras}
\author{Daniel S.~Sage}\address{Department of Mathematics\\
  Louisiana State University\\
  Baton Rouge, LA 70803}\email{sage@math.lsu.edu}
\author{Maria D.~Vega}\email{vega@math.lsu.edu} \thanks{The research of the
  authors was partially supported by NSF grant~DMS-0606300 and
  NSA grant~H98230-09-1-0059.}  \subjclass[2010]{Primary:16T05;
  Secondary: 20C15} \keywords{semisimple Hopf algebra,
character, Frobenius--Schur indicator, automorphism}

%\date{\today}
\begin{abstract}

The classical Frobenius--Schur indicators for finite groups are
character sums defined for any representation and any integer $m\ge
2$. In the familiar case $m=2$, the Frobenius--Schur indicator
partitions the irreducible representations over the complex numbers
into real, complex, and quaternionic representations.  In recent
years, several generalizations of these invariants have been
introduced.  Bump and Ginzburg, building on earlier work of Mackey,
have defined versions of these indicators which are twisted by an
automorphism of the group. In another direction, Linchenko and
Montgomery have defined Frobenius--Schur indicators for semisimple
Hopf algebras.  In this paper, the authors construct twisted
Frobenius--Schur indicators for semisimple Hopf algebras; these include
all of the above indicators as special cases and have similar
properties.

\end{abstract}
\maketitle
\section{Introduction}

Classically, the Frobenius--Schur indicator of a character of a finite
group is the character evaluated at the sum of squares of the group
elements divided by the order of the group.  This indicator was
introduced by Frobenius and Schur in their investigation of real
representations.  Indeed, they showed that the only possible values
for an irreducible representation are $1$, $0$, and $-1$,
corresponding to the partition of the irreducible representations into
real, complex, and quaternionic representations~\cite{FrobSchur:1906}.
Higher order versions can be obtained by replacing squares with other
powers of group elements.

In recent years, there has been increasing interest in various
generalizations of these invariants.  In one direction, Bump and
Ginzburg~\cite{BumpGinz:2004}, building on earlier work of
Mackey~\cite{Mackey:1958} and Kawanaka-Matsuyama~\cite{KawaMats:1990},
have defined versions of Frobenius--Schur indicators which are twisted
by an automorphism of the group.  These indicators have applications
to the study of multiplicity-free permutation representations, models
for finite groups (in the sense of \cite{BGG:1976}), and Shintani
lifting of characters of finite reductive groups.

Another direction involves extending the theory from finite groups to
suitable Hopf algebras.  In 2000, Linchenko and Montgomery constructed
Frobenius--Schur indicators for semisimple Hopf algebras over an
algebraically closed field of characteristic zero and proved that the
second indicator again only takes the values $0$ or $\pm 1$ on
irreducible representations~\cite{LinMont:2000}.  The higher
indicators were further studied by Kashina, Sommerh\"auser, and Zhu,
who used them to prove a version of Cauchy's theorem for Hopf
algebras, namely that the dimension and exponent of a semisimple Hopf
algebra have the same prime factors~\cite{KashinaSommerZhu:2006}.  The
second indicators have also been used in classifying certain Hopf
algebras~\cite{Kashina:2003} and in studying possible dimensions of
representations~\cite{KashinaSommerZhu:2002}.  More recently, Ng and
Schauenberg have introduced a categorical definition of
Frobenius--Schur indicators for pivotal
categories~\cite{NgSchauenburg:2007b} and shown that the two
definitions coincide in the case of Hopf
algebras~\cite{NgSchauenburg:2008}.  This perspective has led to an
extension of Cauchy's theorem to spherical fusion
categories~\cite{NgSchauenburg:2007a}, applications to rational
conformal field
theory~\cite{Bantay:1997,Bantay:2000,NgSchauenburg:2007a}, and some
remarkable relations between certain generalizations of these
indicators and congruence subgroups of
$\SL_2(\Z)$~\cite{SommerZhu:2008,NgSchauenburg:2010}.

The goal of this paper is to construct twisted Frobenius--Schur
indicators for semisimple Hopf algebras over an algebraically closed
field of characteristic zero that include the group and Hopf algebra
indicators described above as special cases and have similar
properties.  Given an automorphism of order $n$ of such a Hopf
algebra, we define the \emph{$m$-th twisted Frobenius--Schur}
indicator for $m$ any positive multiple of $n$.  This definition is
given in Section~\ref{def}.  In the next section, we show that the
$m$-th twisted Frobenius--Schur indicator can be realized as the
trace of an endomorphism of order $m$ (Theorem~\ref{trace}), so that
the indicator is a cyclotomic integer.  In
Section~\ref{twistedsecond}, we consider the case of automorphisms of
order at most two.  We show that the second twisted Frobenius--Schur
indicator gives rise to a partition of the simple modules into three
classes; this partition involves the relationship between the module
and its ``twisted dual'' (Theorem~\ref{tfs2}).  In the final section,
we compute a closed formula for the twisted indicator of the regular
representation (Theorem~\ref{regular}).

%%%%%%%%%%%%%%%%%%%%%
%%%%%%%%%%%%%%%%%%%%%%
\section{Definition} \label{def}
%\subsection{Main Results}
%

%%%%%%%%%%%%%%%%%%%%%%% twisted higher m%%%%%%%%%%%%%%%%%%%%%%%%
%%%%%%%%%%%%%%%

Let $k$ be an algebraically closed field of characteristic $0$, and
let $H$ be a semisimple Hopf algebra over $k$ with comultiplication
$\Delta$, counit $\varepsilon$, and antipode $S$.  The Hopf algebra
$H$ contains a unique two-sided integral $\Lambda$ normalized so that
$\varepsilon(\Lambda)=1$.  We will use the usual Sweedler notation for
iterated comultiplication:
$\Delta^{m-1}(\Lambda)=\sum_{(\Lambda)}{\Lambda_1 \otimes \Lambda_2
  \otimes \cdots \otimes \Lambda_m}$.  Let $\Rep(H)$ be the category
of finite-dimensional left $H$-modules.  All $H$-modules considered
will be objects in $\Rep(H)$.  Throughout the paper, an automorphism
of $H$ will always refer to a Hopf algebra automorphism (or
equivalently, a bialgebra automorphism).  In particular, such an
automorphism commutes with the antipode.

We are now ready to define the twisted indicators.  Let $\tau$ be an
automorphism of $H$ such that $\tau^m=\Id$ for some $m\in\mathbf N$.
Let $(V,\rho)$ be an $H$-module with corresponding character $\chi$.

\begin{definition} 
The \emph{$m$-th twisted Frobenius--Schur indicator of $(V,\rho)$} (or
$\chi$) is defined to be the character sum
\begin{equation}
\label{eq:tdmFS}
 \nu_m(\chi,\tau)= \sum_{(\Lambda)}{\chi\left(\Lambda_1 \tau\left(\Lambda_2\right)\cdots \tau^{m-1}\left(\Lambda_m\right)\right)}.
 \end{equation}
\end{definition}
We note that this is only defined for $m$ divisible by the order of
$\tau$.  We will write $\tnu_m(\chi)$ instead of $\nu_m(\chi,\tau)$ when
this does not cause confusion.

If $\tau=\Id$, this formula coincides with the definition of Linchenko
and Montgomery~\cite{LinMont:2000}.  Moreover, suppose $H=k[G]$ for a
finite group $G$.  In this case, $\Lambda=
\dfrac{1}{|G|}\sum_{g \in G}{g}$, and we recover Bump and Ginzburg's twisted
Frobenius--Schur indicators for groups~\cite{BumpGinz:2004}.

%%%%%%%%%%%%%%%%%%%%%%%%%%%%
%%%%%%%%%%%%%%%%%%%%%%
%
%\section{Higher order twisted Frobenius--Schur indicators}\label{higher}
%%%%%%%%%%%%%%%%%%%%%% higher n proof
%%%%%%%%%%%%%%%%%%%%%%

\section{A trace formula}\label{traceformula}

In this section, we realize $\tnu_m(\chi)$ as the trace of an
endomorphism of order $m$ and use this fact to show that the twisted
Frobenius--Schur indicators are cyclotomic integers.

 We begin by introducing a twisting functor
$\F_{\tau}: \Rep(H) \rightarrow \Rep(H)$.  Given an $H$-module $V$, we
set $\F_{\tau}(V)=V$ as vector spaces with $H$-action defined by
$h\cdot v= \tau(h)v$.  Furthermore, if $f:V \rightarrow W$ is a
morphism of $H$-modules, then $f: \F_{\tau}(V) \rightarrow
\F_{\tau}(W)$ is also an $H$-map; we set $\F_{\tau}(f)=f$.  The
functor $\F_{\tau}$ preserves the trivial module, tensor products, and
duals: 
\begin{equation}\label{rigid}
\F_{\tau}(k)=k, \F_{\tau}(V \otimes W)= \F_{\tau}(V) \otimes
\F_{\tau}(W),\text{ and }  \F_{\tau}(V^*)= \F_{\tau}(V)^*
\end{equation} 
for all $V,W \in \Rep(H)$.  In other words, $ \F_{\tau}$ is a strict,
rigid, $k$-linear endomorphism of $\Rep(H)$.  Moreover, if $\sigma$ is
another automorphism of $H$, then $ \F_{\sigma\tau}= \F_{\tau}
\F_{\sigma}$, so $\F_{\tau}$ is in fact an automorphism.

Let $\Aut_{\mathrm{sr}}(\Rep(H))$ denote the group of strict, rigid,
$k$-linear automorphisms of $\Rep(H)$.  Summing up, we obtain

\begin{proposition} The map $\tau\mapsto  \F_{\tau}$ is an
  anti-homomorphism $\Aut(H)\to\Aut_{\mathrm{sr}}(\Rep(H))$.  In particular, if
  $\tau^m=\Id$, then $\F_{\tau}^m=\Id$.
\end{proposition}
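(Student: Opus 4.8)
The plan is to assemble the structural facts collected in the paragraphs preceding the statement; the heart of the matter is the composition law $\F_{\sigma\tau}=\F_{\tau}\F_{\sigma}$, and everything else follows formally. First I would record that $\F_{\Id}=\Id_{\Rep(H)}$: the module $\F_{\Id}(V)$ carries the action $h\cdot v=\Id(h)v=hv$, which is the original action, and $\F_{\Id}(f)=f$ on morphisms, so $\F_{\Id}$ is literally the identity functor. This identifies the image of the identity automorphism with the identity of $\Aut_{\mathrm{sr}}(\Rep(H))$.

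Next I would verify the anti-homomorphism property on objects by composing the twisted actions and tracking the order in which the automorphisms are applied. For an $H$-module $V$, the module $\F_{\tau}(V)$ carries the action $h\cdot v=\tau(h)v$; applying $\F_{\sigma}$ to this replaces $h$ by $\sigma(h)$, giving $h\star v=\sigma(h)\cdot v=\tau(\sigma(h))v=(\tau\sigma)(h)v$. Hence $\F_{\sigma}\F_{\tau}(V)=\F_{\tau\sigma}(V)$ as $H$-modules; renaming $\sigma\leftrightarrow\tau$ gives the stated relation $\F_{\sigma\tau}=\F_{\tau}\F_{\sigma}$. On morphisms both functors act as the identity map, so the two sides agree there as well. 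The reversal of order is the one point requiring care, and it is the only genuinely computational step. Combined with the fact established in the preceding discussion that each $\F_{\tau}$ is a strict, rigid, $k$-linear endofunctor preserving the unit, tensor products, and duals, and is invertible with inverse $\F_{\tau^{-1}}$, this shows that $\tau\mapsto\F_{\tau}$ is a well-defined map $\Aut(H)\to\Aut_{\mathrm{sr}}(\Rep(H))$ satisfying the anti-homomorphism law.

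Finally, for the ``in particular'' clause I would observe that the powers of a single automorphism commute, so the anti-homomorphism restricts to a genuine homomorphism on the cyclic subgroup $\langle\tau\rangle$. Concretely, iterating the composition law yields $\F_{\tau}^{m}=\F_{\tau^{m}}$, and when $\tau^{m}=\Id$ this equals $\F_{\Id}=\Id$. I expect no serious obstacle here: once the order-reversal in the composition of actions is pinned down correctly, the remaining content is entirely formal bookkeeping.
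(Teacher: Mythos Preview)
Your argument is correct and matches the paper's approach exactly: the proposition is stated as a summary of the preceding discussion, and the only substantive check is the order-reversing composition law $\F_{\sigma\tau}=\F_{\tau}\F_{\sigma}$, which you verify the same way the paper does.
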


Let $(\widetilde{V^{\otimes m}},\widetilde{\rho^{m}})$ be the
$H$-module 
\begin{equation}
\label{tildeV}
 \widetilde{V^{\otimes m}}= V \otimes \F_{\tau}(V) \otimes \left( \F_{\tau}(V) \right) ^2\otimes \cdots \otimes \left( \F_{\tau}(V)\right) ^{m-1}.
\end{equation}
To be explicit, $\widetilde{V^{\otimes m}}$ has underlying vector space $V^{\otimes m}$ and action given by 
$$\widetilde{\rho^{m}}\left(h\right)\left(v_1 \otimes  v_2 \otimes \cdots \otimes v_m\right)= \sum_{(h)}{\rho\left(h_1\right)v_1 \otimes \rho\left(\tau\left(h_2\right)\right)v_2} \otimes \cdots \otimes \rho\left(\tau^{m-1}\left(h_m\right)\right)v_m.$$

Let $\alpha: V^{\otimes m} \rightarrow V^{\otimes m}$ be the linear
map defined by $$\alpha\left(v_1 \otimes v_2 \otimes \cdots \otimes v_m\right)= v_2 \otimes \cdots \otimes v_m \otimes v_1. $$

\begin{lemma}
\label{thtrace}
$$\tnu_{m}(\chi)=\tr_{V^{\otimes m}}\left(\alpha \circ
\widetilde{\rho^m}\left(\Lambda\right)\right).$$
\end{lemma}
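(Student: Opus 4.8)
The plan is to compute the trace on the right-hand side directly by evaluating $\alpha\circ\widetilde{\rho^m}(\Lambda)$ against a basis of $V^{\otimes m}$ together with its dual, and then to recognize the resulting index sum as a matrix trace that collapses to the character sum in~\eqref{eq:tdmFS}. Fix a basis $\{e_i\}$ of $V$ with dual basis $\{e_i^*\}$, and for each multi-index $I=(i_1,\dots,i_m)$ write $E_I=e_{i_1}\otimes\cdots\otimes e_{i_m}$ and $E_I^*=e_{i_1}^*\otimes\cdots\otimes e_{i_m}^*$. Writing matrix coefficients as $\rho(h)_{jk}=\langle e_j^*,\rho(h)e_k\rangle$, the trace of any operator $A$ on $V^{\otimes m}$ is $\sum_I\langle E_I^*,AE_I\rangle$, so the first step is simply to apply $\widetilde{\rho^m}(\Lambda)$ and then $\alpha$ to each $E_I$.

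Applying the definition of $\widetilde{\rho^m}$ gives $\widetilde{\rho^m}(\Lambda)E_I=\sum_{(\Lambda)}\rho(\Lambda_1)e_{i_1}\otimes\rho(\tau(\Lambda_2))e_{i_2}\otimes\cdots\otimes\rho(\tau^{m-1}(\Lambda_m))e_{i_m}$, after which the cyclic shift $\alpha$ moves the first tensor factor to the end. The crucial point -- and the step I expect to require the most care -- is the bookkeeping of indices after this shift: pairing the result against $E_I^*$ and summing over $I$, the factor $\rho(\Lambda_1)e_{i_1}$ lands in the last slot and is paired with $e_{i_m}^*$, so that the summand becomes $\rho(\tau(\Lambda_2))_{i_1i_2}\rho(\tau^2(\Lambda_3))_{i_2i_3}\cdots\rho(\tau^{m-1}(\Lambda_m))_{i_{m-1}i_m}\rho(\Lambda_1)_{i_mi_1}$. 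The essential observation is that the shift $\alpha$ is exactly what forces the index $i_m$ to close back onto $i_1$, turning this cyclic index pattern into a genuine matrix trace rather than some other contraction.

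Summing over $I$ therefore yields $\tr_{V^{\otimes m}}(\alpha\circ\widetilde{\rho^m}(\Lambda))=\sum_{(\Lambda)}\tr\bigl(\rho(\tau(\Lambda_2))\rho(\tau^2(\Lambda_3))\cdots\rho(\tau^{m-1}(\Lambda_m))\rho(\Lambda_1)\bigr)$. To finish, I would use that $\rho$ is an algebra homomorphism to combine the product of operators into a single $\rho$ of a product, giving $\sum_{(\Lambda)}\chi(\tau(\Lambda_2)\cdots\tau^{m-1}(\Lambda_m)\Lambda_1)$, and then invoke the cyclicity of the character, $\chi(ab)=\chi(ba)$, to move $\Lambda_1$ back to the front. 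This recovers $\sum_{(\Lambda)}\chi(\Lambda_1\tau(\Lambda_2)\cdots\tau^{m-1}(\Lambda_m))$, which is precisely $\tnu_m(\chi)$ by~\eqref{eq:tdmFS}. The only genuinely substantive step is the index tracking in the middle paragraph; the algebraic collapse at the end is routine.
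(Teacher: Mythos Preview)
Your proof is correct and follows essentially the same route as the paper's: both rely on the identity $\tr_V(f_1\cdots f_m)=\tr_{V^{\otimes m}}\bigl(\alpha\circ(f_1\otimes\cdots\otimes f_m)\bigr)$, which the paper invokes by citing \cite[Lemma~2.3]{KashinaSommerZhu:2002} while you prove it directly via the basis-and-index computation. The only cosmetic difference is direction---the paper starts from the definition of $\tnu_m(\chi)$ and applies the cited lemma forward, whereas you start from the $V^{\otimes m}$ trace and unwind it---but the substance is identical.
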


\begin{proof}
\begin{eqnarray*} 
\tnu_{m}(\chi)
& = & \sum_{(\Lambda)}{\chi\left(\Lambda_1 \tau\left(\Lambda_2\right)\cdots \tau^{m-1}\left(\Lambda_m\right)\right)}\\
 & = & \sum_{(\Lambda)}{\tr_{V} \left(\rho\left(\Lambda_1\right)\rho\left(\tau\left(\Lambda_2\right)\right) \cdots \rho\left(\tau^{m-1}\left(\Lambda_m\right)\right)\right)}\\
 & = & \tr_{V^{\otimes m}}\left(\alpha \circ \left(\rho \otimes \rho\tau \otimes \cdots \otimes \rho\left(\tau^{m-1}\right)\right)\Lambda\right)\\
 & = & \tr_{V^{\otimes m}}\left(\alpha \circ \widetilde{\rho^m}\left(\Lambda\right)\right).
\end{eqnarray*}

The third equality uses \cite[Lemma 2.3]{KashinaSommerZhu:2002}.
\end{proof}

It is well known that the integral $\Lambda$ in $H$ is cocommutative, i.e.,

\begin{equation*}
%\label{Lambdacocomm}
\Delta\left(\Lambda\right)= \sum_{(\Lambda)}{\Lambda_1 \otimes \Lambda_2 }= \sum_{(\Lambda)}{\Lambda_2 \otimes \Lambda_1}.
\end{equation*}
More generally, 
$\Delta^m(\Lambda)$ is invariant under cyclic permutations:
\begin{equation}\label{eq:Lcycleinv}
\Delta^{m}\left(\Lambda\right)= \sum_{(\Lambda)}\Lambda_1 \otimes \Lambda_2 \otimes \cdots \otimes \Lambda_{m+1}= \sum_{(\Lambda)}\Lambda_2 \otimes \cdots \otimes \Lambda_m \otimes \Lambda_{m+1}\otimes \Lambda_1.
\end{equation}
Note that if $\sigma$ is an automorphism of $H$, then $\sigma(\Lambda)=\Lambda.$

\begin{lemma}\label{lem4.5}
$$ \sum_{(\Lambda)}\Lambda_1 \otimes \tau\left(\Lambda_2\right) \otimes \cdots \otimes \tau^{m-1}\left(\Lambda_{m}\right)= \sum_{(\Lambda)}\tau\left(\Lambda_2\right) \otimes \cdots \otimes \tau^{m-1}\left(\Lambda_m\right)\otimes \Lambda_1.$$

\end{lemma}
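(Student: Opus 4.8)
The plan is to prove the lemma by playing two symmetries of the comultiplied integral against each other: the cyclic invariance recorded in \eqref{eq:Lcycleinv} (which I will apply with $m$ replaced by $m-1$, so that $\Delta^{m-1}(\Lambda)=\sum_{(\Lambda)}\Lambda_1\otimes\cdots\otimes\Lambda_m$ is fixed by the shift sending the first tensor slot to the last) and a ``twist invariance'' coming from $\tau(\Lambda)=\Lambda$. The first observation is that the right-hand side of the lemma is exactly the cyclic shift of the left-hand side, so the assertion is that the twisted element $x:=\sum_{(\Lambda)}\Lambda_1\otimes\tau(\Lambda_2)\otimes\cdots\otimes\tau^{m-1}(\Lambda_m)$ is invariant under moving its first factor to the end.

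First I would establish the twist invariance $(\tau\otimes\tau\otimes\cdots\otimes\tau)\,\Delta^{m-1}(\Lambda)=\Delta^{m-1}(\Lambda)$. Since $\tau$ is a Hopf algebra automorphism it commutes with $\Delta$, so $\Delta^{m-1}(\tau(\Lambda))=(\tau\otimes\cdots\otimes\tau)\,\Delta^{m-1}(\Lambda)$; combined with $\tau(\Lambda)=\Lambda$ this gives the identity. Applying the operator $\Id\otimes\tau\otimes\cdots\otimes\tau^{m-1}$ to both sides of this identity, and using $\tau^m=\Id$ in the last slot, would rewrite $x$ as
\[
x=\sum_{(\Lambda)}\tau(\Lambda_1)\otimes\tau^2(\Lambda_2)\otimes\cdots\otimes\tau^{m-1}(\Lambda_{m-1})\otimes\Lambda_m,
\]
that is, as the operator $U:=\tau\otimes\tau^2\otimes\cdots\otimes\tau^{m-1}\otimes\Id$ applied to $\Delta^{m-1}(\Lambda)$.

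Next I would invoke the cyclic invariance from \eqref{eq:Lcycleinv} inside $U\big(\Delta^{m-1}(\Lambda)\big)$: replacing the representation $\sum_{(\Lambda)}\Lambda_1\otimes\cdots\otimes\Lambda_m$ of the element $\Delta^{m-1}(\Lambda)$ by the cyclically shifted one $\sum_{(\Lambda)}\Lambda_2\otimes\cdots\otimes\Lambda_m\otimes\Lambda_1$ and then evaluating $U$ slot by slot yields
\[
U\big(\Delta^{m-1}(\Lambda)\big)=\sum_{(\Lambda)}\tau(\Lambda_2)\otimes\tau^2(\Lambda_3)\otimes\cdots\otimes\tau^{m-1}(\Lambda_m)\otimes\Lambda_1,
\]
which is precisely the right-hand side of the lemma. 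Chaining the two computations gives $x=U\big(\Delta^{m-1}(\Lambda)\big)=\text{RHS}$, as desired.

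The only genuine obstacle is the bookkeeping of the twist exponents: after the cyclic shift each Sweedler component lands in a slot that should carry a power of $\tau$ one lower than before, and the ``missing'' power on the factor $\Lambda_1$ that wraps around to the end must be absorbed somewhere. This is exactly what $\tau^m=\Id$ together with the global twist invariance accomplishes, so keeping these exponents aligned across the two reindexings is the crux; once the operators $\Id\otimes\tau\otimes\cdots\otimes\tau^{m-1}$ and $U$ are set up correctly, the remaining verification is purely formal.
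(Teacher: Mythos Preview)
Your argument is correct and essentially the same as the paper's: both proofs combine the cyclic invariance \eqref{eq:Lcycleinv} of $\Delta^{m-1}(\Lambda)$ with the twist invariance coming from $\tau(\Lambda)=\Lambda$ (and the fact that $\tau$ is a coalgebra map), then apply a graded $\tau$-operator slotwise and use $\tau^m=\Id$ to absorb the wrap-around exponent. The only cosmetic difference is that the paper phrases the twist invariance via $\tau^{-1}=\tau^{m-1}$ and applies the operator $\tau\otimes\tau^2\otimes\cdots\otimes\tau^m$ once at the end, whereas you split this into two applications; the logic is identical.
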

\begin{proof}
By the previous corollary, $\Delta^{m-1}\left(\Lambda\right)  =
\Delta^{m-1}\left(\tau^{m-1}(\Lambda)\right)$.  Since $\tau^{m-1}$ is a coalgebra
morphism, we get
\begin{align*}
\sum_{(\Lambda)}{\Lambda_1 \otimes \cdots \otimes \Lambda_m} & =  \sum_{(\Lambda)}{\tau^{-1}(\Lambda)_1 \otimes \cdots \otimes \tau^{-1}(\Lambda)_m}\\&= \sum_{(\Lambda)}{\tau^{-1}(\Lambda_1)  \otimes \cdots \otimes \tau^{-1}(\Lambda_m)}. 
\end{align*}
%\begin{eqnarray}
%\label{eq:tc1}
%\Delta^{m-1}\left(\Lambda\right) & = & \Delta^{m-1}\left(\tau(\Lambda)\right) \nonumber\\
%\sum_{(\Lambda)}{\Lambda_1 \otimes \cdots \otimes \Lambda_m} & = & \sum_{(\Lambda)}{\tau(\Lambda)_1 \otimes \cdots \otimes \tau(\Lambda)_m}. 
%\end{eqnarray}

%The map $\tau$ is a morphism of coalgebras,  so 
%\begin{eqnarray}
%\label{eq:tc2}
%\Delta\left(\tau(\Lambda)\right) & = & \sum_{(\Lambda)}{\tau(\Lambda)_1 \otimes \tau(\Lambda)_2} \nonumber\\
%& = & \sum_{(\Lambda)}{\tau(\Lambda_1) \otimes \tau(\Lambda_2)}.
%\end{eqnarray}

% This implies that using \eqref{eq:tc2} recursively, equation \eqref{eq:tc1} yields,

%\begin{eqnarray}
%\label{eq:tc3}
%\sum_{(\Lambda)}{\Lambda_1 \otimes \Lambda_2 \otimes \cdots \otimes \Lambda_m} & = & \sum_{(\Lambda)}{\tau(\Lambda_1) \otimes \tau(\Lambda_2) \otimes \cdots \otimes \tau(\Lambda_m)}.
%\end{eqnarray}

%Since $\tau$ a coalgebra morphism, it follows that $\tau^n$ is also a coalgebra automorphism for any integer $n$, From \eqref{eq:tL} have $\tau(\Lambda)=\Lambda,$ then  $\tau^n(\Lambda)=\Lambda$ for any $n$.  Replacing $\tau$ by $\tau^n$ in \eqref{eq:tc3} we have, 

Combining this equation with (\ref{eq:Lcycleinv}) gives
%Combining this equation with Proposition~\ref{Lcycleinv}, we get

\begin{eqnarray*}
\sum_{(\Lambda)}{\Lambda_2 \otimes \Lambda_3 \otimes \cdots \otimes \Lambda_m \otimes \Lambda_1} = \sum_{(\Lambda)}{\tau^{-1}(\Lambda_1) \otimes \tau^{-1}(\Lambda_2) \otimes \cdots \otimes \tau^{-1}(\Lambda_m)}.
\end{eqnarray*}

Applying $\left(\tau \otimes \tau^2 \otimes \cdots \otimes
\tau^m\right)$, we obtain

\begin{equation*}\sum_{(\Lambda)}{\tau\left(\Lambda_2\right) \otimes  \cdots \otimes
\tau^{m-1}\left(\Lambda_m\right) \otimes \Lambda_1} 
 =  \sum_{(\Lambda)}{\Lambda_1 \otimes \tau(\Lambda_2)\cdots \otimes \tau^{m-1}(\Lambda_m)},
\end{equation*}
as desired.
\end{proof}
 %to both sides and using the fact that $tau^m=1$, we obtain

%\begin{align*}
%\sum_{(\Lambda)}{\tau\left(\Lambda_2\right) \otimes  \cdots \otimes
%\tau^{m-1}\left(\Lambda_m\right) \otimes \Lambda_1} &
%=\sum_{(\Lambda)}{\tau\left(\Lambda_2\right) \otimes  \cdots \otimes
%\tau^{m-1}\left(\Lambda_m\right) \otimes
%\tau^m\left(\Lambda_1\right)}\\
%&   =  \sum_{(\Lambda)}{\tau^m(\Lambda_1) \otimes \cdots \otimes
%\tau^{2m-1}(\Lambda_m)}\\
%& =  \sum_{(\Lambda)}{\Lambda_1 \otimes \tau(\Lambda_2)\cdots \otimes \tau^{m-1}(\Lambda_m)},
%\end{align*}
%as desired.

%Since $\tau^m=1$, this equation is equivalent to the statement of the
%lemma.

%\begin{eqnarray*} 
% \sum_{(\Lambda)}{\tau\left(\Lambda_2\right) \otimes  \cdots \otimes \tau^{m-1}\left(\Lambda_m\right) \otimes \Lambda_1}  =  \sum_{(\Lambda)}{\Lambda_1 \otimes \tau(\Lambda_2)\cdots \otimes \tau^{m-1}(\Lambda_m)},
%\end{eqnarray*}
%as desired.
%\end{proof}

It is well-known that the action of $\Lambda$ on an $H$-module $W$ gives a
projection onto its invariants.  Let $\pi: \widetilde{V^{\otimes m}}
\rightarrow \left(\widetilde{V^{\otimes m}}\right)^H$ defined by
$\pi(w)= \Lambda \cdot w$ be this projection for $W=\widetilde{V^{\otimes m}}$.
\begin{proposition}\label{pirestriction}
The linear automorphism  $\alpha$ restricts to an automorphism of $\left(\widetilde{V^{\otimes m}}\right)^H$.
\end{proposition}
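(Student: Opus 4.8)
The plan is to reduce everything to a single commutation identity between $\alpha$ and the projection $\pi$, and to obtain that identity directly from Lemma~\ref{lem4.5}. Recall that $\Lambda$ acts on $\widetilde{V^{\otimes m}}$ as a projection onto $\left(\widetilde{V^{\otimes m}}\right)^H$, so the invariant subspace is simultaneously the image and the fixed-point set of $\pi$. The key claim I would prove is that
\begin{equation*}
\alpha \circ \pi = \pi \circ \alpha
\end{equation*}
as linear endomorphisms of $V^{\otimes m}$; once this is established, the proposition follows formally.

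To verify the commutation, I would evaluate both sides on a pure tensor. Applying $\pi(w)=\Lambda\cdot w$ and then the cyclic shift $\alpha$ gives
\begin{equation*}
\alpha\bigl(\Lambda\cdot(v_1\otimes\cdots\otimes v_m)\bigr)=\sum_{(\Lambda)}\rho(\tau(\Lambda_2))v_2\otimes\cdots\otimes\rho(\tau^{m-1}(\Lambda_m))v_m\otimes\rho(\Lambda_1)v_1,
\end{equation*}
where the displacement of $\rho(\Lambda_1)v_1$ to the last slot comes from $\alpha$. The coefficient tuple $\sum_{(\Lambda)}\tau(\Lambda_2)\otimes\cdots\otimes\tau^{m-1}(\Lambda_m)\otimes\Lambda_1$ appearing here is exactly the right-hand side of Lemma~\ref{lem4.5}, so I may replace it by $\sum_{(\Lambda)}\Lambda_1\otimes\tau(\Lambda_2)\otimes\cdots\otimes\tau^{m-1}(\Lambda_m)$. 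After this substitution the factor $\Lambda_1$ acts on $v_2$, the factor $\tau(\Lambda_2)$ on $v_3$, and so on, which is precisely $\widetilde{\rho^m}(\Lambda)$ applied to $v_2\otimes\cdots\otimes v_m\otimes v_1=\alpha(v_1\otimes\cdots\otimes v_m)$. Hence the expression equals $\Lambda\cdot\alpha(v_1\otimes\cdots\otimes v_m)=\pi\bigl(\alpha(v_1\otimes\cdots\otimes v_m)\bigr)$, establishing the identity.

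With $\alpha\pi=\pi\alpha$ in hand, the conclusion is immediate. If $w$ is invariant then $w=\pi(w)$, so $\alpha(w)=\alpha(\pi(w))=\pi(\alpha(w))$ lies in the image of $\pi$, i.e.\ in $\left(\widetilde{V^{\otimes m}}\right)^H$; thus $\alpha$ maps the invariants into themselves. Since $\alpha$ is a linear automorphism of $V^{\otimes m}$ of order $m$, the relation $\alpha\pi=\pi\alpha$ also yields $\alpha^{-1}\pi=\pi\alpha^{-1}$, so $\alpha^{-1}=\alpha^{m-1}$ likewise preserves the invariants; therefore $\alpha$ restricts to a bijection, and hence an automorphism, of $\left(\widetilde{V^{\otimes m}}\right)^H$.

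The only genuine work lies in the middle step: carefully tracking which component of $\Delta^{m-1}(\Lambda)$ acts on which tensor factor after the cyclic shift, and recognizing that Lemma~\ref{lem4.5} is precisely the identity needed to turn $\alpha\pi$ into $\pi\alpha$. I expect this index bookkeeping to be the main (though modest) obstacle; everything after the commutation identity is routine linear algebra about projections.
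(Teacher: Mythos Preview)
Your proposal is correct and follows essentially the same approach as the paper: both argue that it suffices to show $\alpha\circ\pi=\pi\circ\alpha$, verify this on pure tensors by expanding both sides, and invoke Lemma~\ref{lem4.5} to identify the two expressions. Your write-up is slightly more explicit than the paper's in spelling out why the commutation with the projection forces $\alpha$ (and $\alpha^{-1}$) to preserve the invariants, but the mathematical content is identical.
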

\begin{proof}
  It is enough to show that $\left(\pi \circ \alpha \right)(w) =
  \left(\alpha \circ \pi\right)(w)$ for $w=v_1 \otimes \cdots \otimes
  v_m.$ Computing gives
\begin{eqnarray*}
\left(\pi \circ \alpha \right)(w) 
& = & (\pi\circ \alpha )(v_1 \otimes \cdots \otimes v_m)\\
& = & \pi \left(v_2 \otimes \cdots \otimes v_m \otimes v_1\right)\nonumber \\
  & = &\sum_{(\Lambda)}{\rho \left(\Lambda_1\right)v_2 \otimes \rho\left(\tau(\Lambda_2)\right)v_3 \otimes \cdots \otimes \rho\left(\tau^{m-1}(\Lambda_m)\right)v_1 },
\end{eqnarray*} 
and

\begin{eqnarray*}
\left(\alpha \circ \pi \right)(v) 
& = & \alpha \left(\Lambda \cdot (v_1 \otimes \cdots \otimes v_m)\right)  \\
 & = & \alpha \left(\sum_{(\Lambda)}{\rho(\Lambda_1))v_1 \otimes \rho(\tau(\Lambda_2))v_2 \otimes \cdots \otimes \rho(\tau^{m-1}(\Lambda_m))v_m }\right) \\
 & = &  \sum_{(\Lambda)}{\rho(\tau(\Lambda_2))v_2 \otimes \cdots
\otimes \rho(\tau^{m-1}(\Lambda_m))v_m  \otimes \rho(\Lambda_1)v_1)}.
\end{eqnarray*}
By Lemma~\ref{lem4.5}, these two expressions are equal.
\end{proof}

\begin{theorem}\label{trace} For any $V\in\Rep(H)$ with character
  $\chi$, the $m$-th twisted Frobenius--Schur indicator satisfies
$$\tnu_m\left(\chi\right)= \tr\left(\alpha|_{\left(\widetilde{V^{\otimes m}}\right)^H}\right).$$
\end{theorem}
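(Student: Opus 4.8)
The plan is to feed the trace formula of Lemma~\ref{thtrace} into the projection picture set up just before the theorem. By Lemma~\ref{thtrace} we have $\tnu_m(\chi)=\tr_{V^{\otimes m}}\left(\alpha\circ\widetilde{\rho^m}\left(\Lambda\right)\right)$, so everything hinges on recognizing what the operator $\widetilde{\rho^m}(\Lambda)$ does. The first and essentially only conceptual step is the observation that this operator \emph{is} the projection $\pi$ onto the invariants: by construction $\pi(w)=\Lambda\cdot w=\widetilde{\rho^m}(\Lambda)(w)$ for every $w\in\widetilde{V^{\otimes m}}$, since the action of the normalized integral $\Lambda$ on any $H$-module is the projection onto its $H$-invariants. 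Thus $\alpha\circ\widetilde{\rho^m}(\Lambda)=\alpha\circ\pi$ as endomorphisms of $V^{\otimes m}$, and the task reduces to showing $\tr_{V^{\otimes m}}(\alpha\circ\pi)=\tr\left(\alpha|_{(\widetilde{V^{\otimes m}})^H}\right)$.

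The remaining step is a standard linear-algebra reduction. I would split $V^{\otimes m}=(\widetilde{V^{\otimes m}})^H\oplus\ker\pi$, where $(\widetilde{V^{\otimes m}})^H=\operatorname{im}\pi$, and choose a basis adapted to this direct sum decomposition. With respect to such a basis $\pi$ is the idempotent that is the identity on the invariants and zero on $\ker\pi$, so $\alpha\circ\pi$ annihilates $\ker\pi$ and agrees with $\alpha$ on the invariants. By Proposition~\ref{pirestriction}, $\alpha$ carries $(\widetilde{V^{\otimes m}})^H$ into itself, which kills the off-diagonal block sending the invariants into $\ker\pi$; consequently the only surviving diagonal block of $\alpha\circ\pi$ is the restriction $\alpha|_{(\widetilde{V^{\otimes m}})^H}$. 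Taking traces in this basis yields $\tr_{V^{\otimes m}}(\alpha\circ\pi)=\tr\left(\alpha|_{(\widetilde{V^{\otimes m}})^H}\right)$, which is exactly the claimed identity once combined with the reduction from the first paragraph.

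I do not expect a genuine obstacle, since the two substantive ingredients---the trace formula and the invariance of $\alpha$ under the projection---are already in hand as Lemma~\ref{thtrace} and Proposition~\ref{pirestriction}. The one point deserving care is precisely the use of Proposition~\ref{pirestriction} in the second step: in general one only has $\tr(\alpha\circ\pi)=\tr\left(\pi\alpha\pi|_{\operatorname{im}\pi}\right)$, i.e.\ the trace of the \emph{compression} of $\alpha$ to the image of $\pi$, and it is the fact that $\alpha$ genuinely preserves $(\widetilde{V^{\otimes m}})^H$ that upgrades this compression to the honest restriction $\alpha|_{(\widetilde{V^{\otimes m}})^H}$. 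With that invariance available, the subtlety disappears and the proof is short.
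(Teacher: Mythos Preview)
Your proposal is correct and follows essentially the same route as the paper's proof: identify $\widetilde{\rho^m}(\Lambda)$ with the projection $\pi$, invoke Proposition~\ref{pirestriction} to see that $\alpha\circ\pi$ has image in the invariants and agrees there with $\alpha$, and conclude via Lemma~\ref{thtrace}. The paper compresses the linear-algebra step into a single sentence, while you spell out the block decomposition and the compression-versus-restriction point more carefully; the content is the same.
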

\begin{proof} By Proposition~\ref{pirestriction}, the image of
$\alpha$ is contained in $\left(\widetilde{V^{\otimes m}}\right)^H$.
Moreover, its restriction to $\left(\widetilde{V^{\otimes m}}\right)^H$
coincides with the restriction of $\alpha$.  The result now follows by
Lemma~\ref{thtrace}.
\end{proof}

\begin{corollary}
Let $\zeta_m$ be a primitive $m$-th root of $1$, then 
$$\tnu_m\left(\chi\right) \in \Z\left[\zeta_m\right].$$  
\end{corollary}
\begin{proof}
The operator $\alpha$ is of order $m$, so its eigenvalues are $m$-th
roots of unity. It is now immediate from the theorem that the twisted
indicators are cyclotomic integers.
\end{proof}

As we will see below, when $m=2$, the twisted Frobenius--Schur
indicators are actually in $\Z$.

\section{Twisted second Frobenius--Schur indicators}\label{twistedsecond}
%%%%%%%%%%%%%%%%%%
%%%%%%%%%%%%%%%%%%%case n=2%%%%%%%%%%%%%%%%%%%%%%%%%%%%%%%%
%%%%%%%%%%%%%%%%%%%%%%%%

In this section, we will show that the second twisted Frobenius--Schur indicator
gives rise to a partition of the irreducible $H$-modules into three
classes, depending on the relationship between the module and its
\emph{twisted dual}.  We also compute the indicators for all
automorphisms of $H_8$--the smallest semisimple Hopf algebra that is
neither commutative nor cocommutative.

\subsection{Twisted duals and the partition of the simple modules}
Let $\tau$ be an automorphism such that $\tau^2=\Id$.  We will let
$T=\tau S$ denote the corresponding anti-involution.  Note that
$TS=ST$.  Let $(V,\rho)$ be a finite-dimensional $H$-module with
character $\chi$. Using \eqref{eq:tdmFS} for $m=2$, we have
$$\tnu_2(\chi)= \sum_{(\Lambda)}
\chi\left(\Lambda_1TS(\Lambda_2)\right).$$

\begin{definition}
 The \emph{twisted duality functor} $(-)^\dag:\Rep(H)\to\Rep(H)$ is the
 composition of $\F_{\tau}$ and the duality functor.
\end{definition}
In other words, $V^\dag$ is the dual space $V^*$ equipped with the
$H$-module structure given by $$(h \cdot f)(v)=f(T(h)\cdot v),$$ for
all $h\in H, f\in V^*$ and $v \in V$.  If $f:V\rightarrow W$ is an
$H$-map, then $f^\dag: W^{\dag}\rightarrow V^{\dag}$ is just the usual
dual map.

\begin{lemma}\label{doubletdual} There is an equality of functors
  $(-)^{\dag\dag}=(-)^{**}$.  In particular, $(-)^{\dag}$  is an
  involutory auto-equivalence of $\Rep(H)$.
\end{lemma}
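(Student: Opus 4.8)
The plan is to strip the definition down to a composition of functors. Since an automorphism of $H$ commutes with the antipode, $T=\tau S=S\tau$, and so the module structure $(h\cdot f)(v)=f(T(h)\cdot v)$ on $V^\dag$ is precisely that of $\F_{\tau}(V^*)$; that is, $(-)^\dag=\F_{\tau}\circ(-)^*$ on the nose, not merely up to isomorphism. The two ingredients I would isolate are already available in the excerpt: equation~\eqref{rigid} gives $\F_{\tau}(W^*)=\F_{\tau}(W)^*$, i.e.\ $\F_{\tau}$ commutes strictly with the duality functor, and the proposition on $\tau\mapsto\F_{\tau}$ gives $\F_{\tau}^2=\Id$ because $\tau^2=\Id$.

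With these in hand the first assertion is a short computation at the level of objects. For $V\in\Rep(H)$,
\[
V^{\dag\dag}=\F_{\tau}\bigl((\F_{\tau}(V^*))^*\bigr)=\F_{\tau}\bigl(\F_{\tau}(V^{**})\bigr)=\F_{\tau}^2(V^{**})=V^{**},
\]
where the second equality applies \eqref{rigid} with $W=V^*$ and the last uses $\F_{\tau}^2=\Id$. The same manipulation applied to a morphism $f$ (recalling that $\F_{\tau}(f)=f$ and that $(-)^*$, $(-)^\dag$ are the ordinary and twisted transpose) shows the two functors agree on arrows as well, yielding the strict equality $(-)^{\dag\dag}=(-)^{**}$.

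For the \emph{in particular} clause I would invoke the standard fact that the double-dual functor $(-)^{**}$ is an auto-equivalence of $\Rep(H)$ naturally isomorphic to the identity. The natural transformation is the evaluation map $V\to V^{**}$, $v\mapsto\ev_v$; a direct check shows $h\cdot\ev_v=\ev_{S^2(h)v}$, so this is an $H$-module map exactly when $S^2=\Id$. The standing hypothesis that $H$ is semisimple over a field of characteristic zero guarantees $S^2=\Id$ by Larson--Radford, so $(-)^{**}\cong\Id$ and hence $(-)^{\dag\dag}\cong\Id$. Thus $(-)^\dag$ is its own quasi-inverse, i.e.\ an involutory auto-equivalence. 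The only genuinely nonformal input is this appeal to $S^2=\Id$; everything else is the bookkeeping of composing $\F_{\tau}$ with duality, and the main thing to be careful about is the order of composition and confirming that \eqref{rigid} licenses the strict (rather than merely natural) commutation used in the displayed computation.
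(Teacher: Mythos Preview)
Your proof is correct and follows essentially the same approach as the paper: the displayed computation $V^{\dag\dag}=\F_{\tau}((\F_{\tau}(V^*))^*)=\F_{\tau}^2(V^{**})=V^{**}$ via \eqref{rigid} and $\F_{\tau}^2=\Id$ is exactly the paper's argument, and the remark on morphisms matches as well. Your treatment of the ``in particular'' clause is more explicit than the paper's (which leaves it implicit and only later uses the evaluation isomorphism $V\to V^{**}$), but the underlying Larson--Radford input $S^2=\Id$ is the same.
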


\begin{proof} Equation \eqref{rigid} implies  
\begin{equation*}
V^{\dag \dag}= \F_{\tau}((\F_{\tau}(V^*))^{*})=\F_{\tau}^2(V^{**})=V^{**}
\end{equation*}
for any module $V$.  It is immediate that  $f^{\dag \dag}= f^{**}$ for
any $H$-map $f$.
\end{proof}

The lemma shows that the usual evaluation map $\Psi:V\to
V^{\dag\dag}$ given by $\Psi(v)(f)=f(v)$ is a canonical isomorphism of
$H$-modules.  We now define the transpose endomorphism on
$\Hom(V^\dag,V)$ via $f\mapsto\Psi^{-1} \circ f^\dag$.  (Since
$\Hom(V^\dag,V)$ and $\Hom(V^*,V)$ share the same underlying vector
space, this is just the usual transpose.)  In general, transposition
is not $H$-linear.  However, it is immediate that it restricts to give
an endomorphism of $ \Hom_H(V^\dag,V)$.  In particular, we can
consider symmetric and skew-symmetric $H$-maps $V^\dag\to V$:
$$\Sym_H(V^{\dag},V)=\left\lbrace f\in \Hom_H (V^{\dag},V)\vert f^t=f \right\rbrace $$
and 
$$\Alt_H(V^{\dag},V)=\left\lbrace f\in \Hom_H (V^{\dag},V)\vert f^t=-f\right\rbrace.$$

%  \begin{definition}
%  The \emph{twisted dual $H$-module of $V$} is the dual space $V^*$ equipped with an $H$-module structure given  by $$(h \cdot f)(v)=f(T(h)\cdot v),$$ for all $h\in H, f\in V^*$ and $v \in V.$ We denote it by $(V^{\dag}, \widetilde{\rho}).$
%   If $f:V\rightarrow W$ is a morphism of finite dimensional left $H$-modules, define $f^{\dag}: W^{\dag}\rightarrow V^{\dag}$ by $(f^{\dag} (\beta))(v)= \beta(f(v))$, for $\beta \in W^{\dag}, v \in V.$
%  \end{definition}

% \begin{proposition}
%  The twisted dual $H$-module $(V^{\dag}, \widetilde{\rho})$ satisfies $\widetilde{\rho}(h)=\rho(T(h))^t$.

% \end{proposition}
% \begin{proof}
%  For $h\in H, \beta \in V^{\dag} $ and $v\in V$ we have, 
% \begin{eqnarray*}
% \rho(T(h))^t(\beta)(v)
% &=& \beta(\rho (T(h))(v))\\
% &=& \widetilde{\rho}(h)\beta(v)
% \end{eqnarray*}
% Thus, $\rho(T(h))^t=\widetilde{\rho}(h)$, as required.
% \end{proof}

% Let $\tau=TS$, now we have an involution. Using  \eqref{eq:tdmFS} for $m=2$, we have  
%$$\tnu_2(\chi)= \sum_{(\Lambda)} \chi\left(\Lambda_1TS(\Lambda_2)\right).$$
 
We can now state the main theorem of this section.
\begin{theorem}%[Sage\,--V. 2010]
\label{tfs2}
Let $V$ be an irreducible representation with character $\chi$.  Then the following properties hold: 
\begin{enumerate}
\item $\tnu_2(\chi)= 0,1,$ or $-1, \forall \chi \in \Irr(H)$.

\item $\widetilde{\nu_2}(\chi) \neq 0$ if and only if $V \cong V^{\dag}$. Moreover,  $\tnu_{2}(\chi)=1$ (resp. $-1$) if and only if there is a symmetric (resp. skew-symmetric) nonzero intertwining map $V \rightarrow V^{\dag}$.
\end{enumerate}
\end{theorem}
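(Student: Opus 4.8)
The plan is to reduce the statement to Schur's lemma by way of the trace formula in Theorem~\ref{trace}. That theorem gives $\tnu_2(\chi)=\tr\bigl(\alpha|_{(\widetilde{V^{\otimes 2}})^H}\bigr)$, where $\widetilde{V^{\otimes 2}}=V\otimes\F_{\tau}(V)$ and $\alpha$ is the swap $v_1\otimes v_2\mapsto v_2\otimes v_1$. For $m=2$ the map $\alpha$ is an involution, so, since the characteristic is zero, it diagonalizes on the invariant subspace as a direct sum of its $+1$- and $-1$-eigenspaces, and $\tnu_2(\chi)$ is the difference of their dimensions. The whole problem thus becomes the identification of these two eigenspaces.

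First I would construct a canonical vector-space isomorphism $(\widetilde{V^{\otimes 2}})^H\cong\Hom_H(V^{\dag},V)$. Applying the natural identification $\Hom_k(A,B)\cong B\otimes A^*$ and then passing to $H$-invariants gives $\Hom_H(V^{\dag},V)\cong\bigl(V\otimes(V^{\dag})^*\bigr)^H$, so it suffices to see that $(V^{\dag})^*\cong\F_{\tau}(V)$. This follows from \eqref{rigid}: one has $(V^{\dag})^*=(\F_{\tau}(V^*))^*=\F_{\tau}(V^{**})$, and the canonical evaluation $V\cong V^{**}$ gives, after applying $\F_{\tau}$, the required isomorphism. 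Both sides therefore share the same underlying invariant space.

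The central step is to verify that under this identification the swap $\alpha$ corresponds to the transpose $f\mapsto f^t=\Psi^{-1}\circ f^{\dag}$ on $\Hom_H(V^{\dag},V)$. Granting this, the $+1$-eigenspace of $\alpha$ is exactly $\Sym_H(V^{\dag},V)$ and the $-1$-eigenspace is $\Alt_H(V^{\dag},V)$, so that $\tnu_2(\chi)=\dim\Sym_H(V^{\dag},V)-\dim\Alt_H(V^{\dag},V)$. Since $(-)^{\dag}$ is an auto-equivalence (Lemma~\ref{doubletdual}), $V^{\dag}$ is again irreducible, and Schur's lemma shows that $\Hom_H(V^{\dag},V)$ is zero when $V\not\cong V^{\dag}$ and one-dimensional when $V\cong V^{\dag}$. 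In the former case both summands vanish and $\tnu_2(\chi)=0$; in the latter the involution $t$ acts on a line as $\pm 1$, placing the essentially unique nonzero intertwiner in exactly one of $\Sym_H(V^{\dag},V)$ or $\Alt_H(V^{\dag},V)$ and giving $\tnu_2(\chi)=\pm 1$. This yields both the trichotomy in~(1) and the characterization in~(2).

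The hard part is the verification in the third step that the linear swap becomes the transpose. This requires threading together several canonical isomorphisms---the Hom-tensor identification, the double-dual evaluation $\Psi$, and the functoriality of $\F_{\tau}$---and confirming that they are compatible with interchanging the two tensor factors. The twist by $\F_{\tau}$ adds bookkeeping, but because $\F_{\tau}$ is a strict, rigid, $k$-linear tensor endofunctor of $\Rep(H)$ (equation~\eqref{rigid}), it commutes with all of the structure maps involved, so the untwisted compatibility familiar from the classical Frobenius--Schur setting should carry over essentially unchanged.
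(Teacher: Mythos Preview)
Your proposal is correct and follows essentially the same route as the paper: the paper packages your ``central step'' as Lemma~\ref{conjugate} (showing that the swap $\alpha$ corresponds to the transpose under the canonical isomorphism $Q:\widetilde{V^{\otimes 2}}\to\Hom(V^{\dag},V)$) and your trace-difference formula as Proposition~\ref{symalt3}, then finishes exactly as you do via Schur's lemma and Lemma~\ref{doubletdual}. The verification you flag as the hard part is dispatched quickly in the paper by observing that, as a linear map, $Q$ is just the standard isomorphism $V\otimes V\to\Hom(V^*,V)$, for which the transpose--swap compatibility is classical.
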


\begin{remark}
This result is well-known in two special cases.  If we let $T=S$
(i.e., $\tau=\Id$), then we recover Theorem 3.1 in
\cite{LinMont:2000}.  On the other hand, when $H$ is a group algebra,
this is a theorem of Sharp~\cite{Sharp:1960} and
Kawanaka-Matsuyama~\cite{KawaMats:1990}.  See also \cite{KwonSage:2008}.
\end{remark}

We will provide some preliminary results before proving the theorem.

There is a canonical $H$-isomorphism $Q:\widetilde{V^{\otimes 2}}\to
\Hom(V^{\dag},V)$ given by 
\begin{align*} \widetilde{V^{\otimes 2}}&=V \otimes \F_{\tau}(V)\\
&\cong V \otimes \F_{\tau}(V)^{**}\\
&\cong \Hom(\F_{\tau}(V)^*,V)\\
&=\Hom(V^{\dag},V).
\end{align*}
As a linear map, $Q$ is just the usual isomorphism $V\otimes V\to
\Hom(V^*,V)$ with $Q(v\otimes w)(\phi)=\phi(w)v$ for $v,w\in V$ and
$\phi\in V^*$. Thus, $Q\circ\alpha=(-)^t\circ Q$.  Taking
$H$-invariants and applying
Proposition~\ref{pirestriction}, we obtain the following lemma.

\begin{lemma}\label{conjugate}  There is a commutative diagram of $H$-maps 
\[
\begin{diagram}
  \node{\Hom_H(V^{\dag},V)} \arrow{e,t}{(-)^t}
  \node{\Hom_H(V^{\dag},V)} \\
  \node{(\widetilde{V^{\otimes 2}})^H}
  \arrow{n,l}{Q}\arrow{e,t}{\alpha} \node{(\widetilde{V^{\otimes
        2}})^H}\arrow{n,r}{Q}
\end{diagram}
\]
\end{lemma}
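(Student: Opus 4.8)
The plan is to prove the lemma in two stages: first establish the global, merely linear, identity $Q\circ\alpha=(-)^t\circ Q$ on all of $\widetilde{V^{\otimes 2}}$, and then restrict this identity to the respective subspaces of $H$-invariants, where it becomes exactly the asserted commutative square. The three structural facts that make the restriction legitimate are already available: $\alpha$ preserves $(\widetilde{V^{\otimes 2}})^H$ by Proposition~\ref{pirestriction}; transposition preserves $\Hom_H(V^\dag,V)$, as noted when $(-)^t$ was introduced; and $Q$ is a canonical $H$-isomorphism $\widetilde{V^{\otimes 2}}\xrightarrow{\sim}\Hom(V^\dag,V)$, being the composite of the canonical $H$-isomorphisms displayed in the text (evaluation into the double dual and the tensor-hom isomorphism, together with the identifications from \eqref{rigid}). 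Thus the only genuine computation is the global identity, after which everything is bookkeeping.

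First I would verify $Q\circ\alpha=(-)^t\circ Q$ on a decomposable tensor $v\otimes w$, which suffices by linearity. On one side, $Q(\alpha(v\otimes w))=Q(w\otimes v)$ is the map $\phi\mapsto\phi(v)\,w$. On the other, since $V^\dag$ and $V^*$ have the same underlying space and $\F_\tau$ is the identity on underlying maps, the transpose $(-)^t=\Psi^{-1}\circ(-)^\dag$ coincides with the ordinary transpose on $\Hom(V^*,V)$, with $\Psi$ the standard evaluation isomorphism; this is precisely what Lemma~\ref{doubletdual} secures. A short direct computation then shows that for $f=Q(v\otimes w)$ the map $f^t$ is again $\phi\mapsto\phi(v)\,w$. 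The two sides agree on every decomposable tensor, so the identity holds on all of $\widetilde{V^{\otimes 2}}$.

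It remains to pass to invariants. Because $Q$ is an $H$-isomorphism, it carries $(\widetilde{V^{\otimes 2}})^H$ isomorphically onto $\bigl(\Hom(V^\dag,V)\bigr)^H$, and the latter is exactly $\Hom_H(V^\dag,V)$, since the $H$-invariants of an internal Hom are the equivariant maps; these restrictions of $Q$ are the vertical arrows of the diagram. Restricting the global identity to $(\widetilde{V^{\otimes 2}})^H$, on which $\alpha$ acts and $Q$ is an isomorphism, then yields the commutativity of the square; because all four corners carry the trivial $H$-action, every map is automatically an $H$-map. The one point deserving care is this hinge---the identification $\bigl(\Hom(V^\dag,V)\bigr)^H=\Hom_H(V^\dag,V)$ and the fact that an $H$-isomorphism restricts to invariants---which is what converts the purely linear identity into the statement about the equivariant objects appearing in the diagram.
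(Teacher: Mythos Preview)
Your proof is correct and follows essentially the same approach as the paper: the paper also derives the global linear identity $Q\circ\alpha=(-)^t\circ Q$ directly from the explicit formula $Q(v\otimes w)(\phi)=\phi(w)v$, and then obtains the lemma by taking $H$-invariants and invoking Proposition~\ref{pirestriction}. You have simply written out in detail what the paper's one-line ``Thus, $Q\circ\alpha=(-)^t\circ Q$'' and ``Taking $H$-invariants'' compress.
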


Let $\beta$ be the restriction of the transpose map to
$\Hom_H(V^{\dag},V)$.  The lemma says that $\beta$
is a conjugate of $\alpha|_{\left(\widetilde{V^{\otimes
        m}}\right)^H}$.  Since $\beta^2=\Id$, the eigenspace
decomposition of $\beta$ gives
\begin{equation}
   \label{eq:parity}\Hom_H(V^{\dag},V)=\Sym_H(V^{\dag},V) \oplus
   \Alt_H(V^{\dag},V).
 \end{equation}

\begin{proposition}\label{symalt3} Let $V$ be an $H$-module. Then,  
  \begin{equation*}
\tnu_2\left(\chi\right)= \dim \Sym_H(V^{\dag},V) - \dim \Alt_H(V^{\dag},V).
\end{equation*}
\end{proposition}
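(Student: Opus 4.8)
The plan is to combine the trace formula from Theorem~\ref{trace} with the conjugation relation established in Lemma~\ref{conjugate}. By Theorem~\ref{trace}, we have $\tnu_2(\chi)=\tr\bigl(\alpha|_{(\widetilde{V^{\otimes 2}})^H}\bigr)$. Lemma~\ref{conjugate} exhibits $Q$ as an $H$-isomorphism from $(\widetilde{V^{\otimes 2}})^H$ to $\Hom_H(V^{\dag},V)$ intertwining $\alpha$ with the transpose map $\beta=(-)^t$. Since trace is invariant under conjugation by an isomorphism, I would immediately conclude $\tnu_2(\chi)=\tr(\beta)$, where $\beta$ is the transpose map restricted to $\Hom_H(V^{\dag},V)$.

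The remaining work is to identify $\tr(\beta)$ with the claimed difference of dimensions. Here I would invoke the eigenspace decomposition \eqref{eq:parity}: since $\beta^2=\Id$ and the characteristic of $k$ is zero, $\beta$ is diagonalizable with eigenvalues $+1$ and $-1$, and its $(+1)$-eigenspace is precisely $\Sym_H(V^{\dag},V)$ while its $(-1)$-eigenspace is $\Alt_H(V^{\dag},V)$. Computing the trace of an involution as the dimension of its $(+1)$-eigenspace minus the dimension of its $(-1)$-eigenspace then yields
\begin{equation*}
\tnu_2(\chi)=\tr(\beta)=\dim\Sym_H(V^{\dag},V)-\dim\Alt_H(V^{\dag},V),
\end{equation*}
which is exactly the assertion of Proposition~\ref{symalt3}.

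This argument is essentially a two-step assembly of previously established facts, so I do not expect any genuine obstacle. The only point requiring a moment of care is that the statement is claimed for an arbitrary $H$-module $V$, not merely an irreducible one, so I must make sure that every ingredient I use is valid in this generality. Both Theorem~\ref{trace} and Lemma~\ref{conjugate} are stated for general $V\in\Rep(H)$, and the decomposition \eqref{eq:parity} follows purely from $\beta^2=\Id$ together with $\operatorname{char}k=0$, independent of irreducibility. Hence the proof goes through without the irreducibility hypothesis, which is desirable since Proposition~\ref{symalt3} will presumably be applied to decompose the three classes in Theorem~\ref{tfs2}.
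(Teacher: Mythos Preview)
Your proposal is correct and follows essentially the same route as the paper: use Theorem~\ref{trace} to express $\tnu_2(\chi)$ as the trace of $\alpha$ on invariants, use Lemma~\ref{conjugate} to replace this by $\tr(\beta)$, and then read off the trace of the involution $\beta$ from the eigenspace decomposition~\eqref{eq:parity}. Your additional remark that all ingredients hold for an arbitrary $V\in\Rep(H)$, not just irreducible $V$, is accurate and worth noting.
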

\begin{proof}  By \eqref{eq:parity}, the right side of this equation is
  $\tr(\beta)$. The assertion follows since
  $\tnu_2\left(\chi\right)=\tr(\beta)$ by Theorem~\ref{trace} and
  Lemma~\ref{conjugate}.
\end{proof}
% \begin{proposition} Let $V$ be an $H$-module. Then,  
%  \begin{equation*}
%    \label{eq:parity}\Hom_H(V^{\dag},V)=\Sym_H(V^{\dag},V) \oplus
%    \Alt_H(V^{\dag},V)\text{ and}
%  \end{equation*}
% \begin{equation*}\label{symalt3}
% \tnu_2\left(\chi\right)= \dim \Sym_H(V^{\dag},V) - \dim \Alt_H(V^{\dag},V).
% \end{equation*}
% \end{proposition}
% \begin{proof}  Since $\beta^2=\Id$, the first statement is just the
%   eigenspace decomposition of $\Hom_H(V^{\dag},V)$.  Since
%   $\tnu_2\left(\chi\right)=\tr(\beta)$ by Theorem~\ref{trace} and
%   Lemma~\ref{conjugate}, the second assertion  follows from
%   part~\eqref{parity}.
% \end{proof}

\begin{remark}
The standard decomposition of $\Hom(V^{\dag},V)$ into symmetric and
skew-symmetric linear maps is not necessarily an $H$-decomposition.  In
fact, even when $\tau=\Id$, one need not get an $H$-decomposition unless $H$ is
cocommutative.
\end{remark}

%  However, it is an immediate consequence of the
% previous lemma that 
% \begin{equation*}
% \label{eq:parity}
% \Hom_H(V^{\dag},V)=\Sym_H(V^{\dag},V) \oplus \Alt_H(V^{\dag},V),
% \end{equation*} 
% where  $$\Sym_H(V^{\dag},V)=\left\lbrace f\in \Hom_H (V^{\dag},V)\vert f^t=f \right\rbrace $$
% and 
% $$\Alt_H(V^{\dag},V)=\left\lbrace f\in \Hom_H (V^{\dag},V)\vert f^t=-f\right\rbrace.$$

\begin{proof}[Proof of Theorem~\ref{tfs2}]
Since $V$ is simple, it follows from Lemma~\ref{doubletdual} that
$V^\dag$ is also simple.  By Schur's Lemma, $\dim\Hom_H(V^\dag,V)\le
1$.  If $V^\dag\ncong V$, then $\dim\Hom_H(V^\dag,V)=0$, so
$\tnu_2\left(\chi\right)=0$ by Proposition~\ref{symalt3}.  Otherwise,
$V^\dag\cong V$, and Proposition~\ref{symalt3} shows that
$\tnu_2\left(\chi\right)$ equals $1$ or $-1$ depending on the parity
of any such isomorphism.
\end{proof}

\begin{remark}  One can also prove Theorem~\ref{tfs2} using the
  orthogonality relations for irreducible characters instead of
  Theorem~\ref{trace}.  Recall that  if the irreducible characters of $H$ are
given by $\chi_1,\dots,\chi_n$, then 
\begin{equation*}
%\label{eq:orthrels1}
\sum_{(\Lambda)}{ \chi_i\left(\Lambda_1\right)\chi_j\left(S\left(\Lambda_2\right)\right)}= \delta_{ij}.
\end{equation*}
(This is the dual statement of Theorem 7.5.6 in \cite{Dasc:2000}.)
Given a module $(V,\rho)$, the twisted dual
$(V^\dag,\widetilde{\rho})$ satisfies
$\widetilde{\rho}(h)=\rho(T(h))^t$.  Using this, one computes 
\begin{equation*} \tnu_2 \left( \chi \right)=\sum_{m,m'}{\sum_{(\Lambda)}{\rho(  \Lambda_1)_{mm'} \widetilde{\rho}( S(\Lambda _2 ))_{mm'}}}.
\end{equation*}
Now, assume that $V$ is simple.  If $V \not\cong V^{\dag}$, then this expression is $0$ by the
orthogonality relations.  Otherwise,  there exists
a nonzero intertwiner $\varphi\in \Hom_H(V^{\dag},V)$, so that
$\widetilde{\rho}(h)= \varphi^{-1}\rho(h)\varphi$; moreover, $\varphi$
is symmetric or skew-symmetric.  A calculation using the orthogonality
relations for matrix elements given in  \cite{Larson:1971} shows that
the above expression reduces to the parity of $\varphi$.
\end{remark}

\subsection{The second twisted Frobenius--Schur indicators for $H_8$}
\label{tFSH8section}
%%%%%%%%%%%%%%%%%%%%%%

The smallest semisimple Hopf algebra which is neither commutative nor
cocommutative has dimension $8$.  We denote it by $H_8$.  As an
algebra, $H_8$ is generated by elements $x$, $y$ and $z$,  with relations: 
\begin{equation*}
x^2=y^2=1, \; z^2= \dfrac{1}{2}\left(1+x+y-xy\right), \;xy=yx,\; xz=zy, \text{ and } yz=zx.
\end{equation*}
The coalgebra structure of $H_8$ is given by the following:
$$\Delta(x)=x \otimes x,\; \varepsilon(x)=1, \text{ and } S(x)=x,$$
$$\Delta(y)=y \otimes y, \; \varepsilon(y)=1, \text{ and } S(y)=y,$$
$$\Delta(z)=\dfrac{1}{2}\left(1 \otimes 1 + 1 \otimes x + y \otimes 1
- y \otimes x\right)\left(z \otimes z\right),$$ $$ \varepsilon(z)=1,
\text{ and } S(z)=z.$$ The normalized integral is given by
$$\Lambda=\dfrac{1}{8}\left(1+x+y+xy+z+xz+yz+xyz\right).$$ This Hopf
algebra was first introduced by Kac and Paljutkin~\cite{KacPalj:1966}
and revisited later by Masuoka~\cite{Masuoka:1995}.

The Hopf algebra $H_8$ has $4$ one-dimensional representations and a
single two-dimensional simple module.  The characters for the
irreducible representations of $H_8$ are listed in
Table~\ref{tabH8irreps}.
\begin{center}
\begin{table}
%\textbf{Characters for the Irreducible Representations of $H_8$}\\
% use packages: array
\begin{tabular}{|c||c|c|c|c|c|c|c|c|}
\hline
 & $1$    & $x$ & $y$  & $xy$ & $z$ & $xz$ & $yz$ & $xyz$\\
\hline
\hline
$\chi_1$  & $1$  & $1$ & $1$  & $1$ & $1$ &1 & $1$& $1$\\
$\chi_2$  & $1$  & $1$ & $1$  & $1$ & $-1$ & $-1$& $-1$ & $-1$\\
$\chi_3$  & $1$  & $-1$ & $-1$  & $1$ & $i$ &$-i$ & $-i$ & $i$\\
$\chi_4$  & $1$  & $-1$ & $-1$  & $1$ & $-i$ &$i$ & $i$ &  $-i$ \\
$\chi_5$  & $2$  & $0$ & $0$  & $-2$ & $0$ & $0$ & $0$ & $0$\\
\hline
\end{tabular}
\vspace{3ex}
\caption{Characters for the Irreducible Representations of $H_8$}
\label{tabH8irreps}
 \end{table}
\end{center}

The automorphism group of $H_8$ is the Klein four-group.  These
automorphisms are given in Table~\ref{autotableforH8}.

\begin{center}
\begin{table}
\begin{tabular}{|c||c|c|c|c|}
\hline
& $1$ & $x$& $y$& $z$\\
\hline
\hline
$\tau_1=\Id$& $1$ & $x$& $y$& $z$\\
$\tau_2$& $1$ & $x$& $y$ & $xyz$\\
$\tau_3$& $1$ & $y$& $x$& $\frac{1}{2}\left(z+xz+yz-xyz\right)$\\
$\tau_4$& $1$ & $y$& $x$& $\frac{1}{2}\left(-z+xz+yz+xyz\right)$\\
\hline
\end{tabular}
\vspace{3ex}
\caption{Automorphisms of $H_8$}
\label{autotableforH8}
\end{table}
\end{center}

All four automorphisms satisfy $\tau^2=\Id$, so the second twisted Frobenius--Schur
indicator is defined for all of them.  These indicators are given in
Table~\ref{tindstable}.
%In all four automorphisms we  show that  $\tau^2=\Id$. Our goal is to compute the second twisted indicators: $$\nu_2(\chi_j, \tau_k):=\tnu_2(\chi_j)=\sum_{(\Lambda)}{\chi_j(\Lambda_1 \tau_k(\Lambda_2))}$$ for all five characters $\chi_j$ of the irreducible representations of $H_8$  and all automorphisms $\tau_k$.  To this end, we consider our two sided integral $$\Lambda=\dfrac{1}{8}\left(1+x+y+xy+z+xz+yz+xyz\right).$$

%We apply $\Delta$ and use the fact that $\tau_k$ is an  algebra morphism for all $k$.

%\begin{equation*}
%\begin{split}
%%\Lambda= \dfrac{1}{8}&\left(1+x+y+xy+z+xz+yz+xyz\right)\\
%\Delta(\Lambda)= \dfrac{1}{8}&( 1 \otimes 1 + x \otimes x +y \otimes y +xy \otimes xy\\ 
%& + \dfrac{1}{2}z \otimes z + \dfrac{1}{2}z \otimes xz + \dfrac{1}{2}yz \otimes z- \dfrac{1}{2}yz \otimes xz\\
%& + \dfrac{1}{2}xz \otimes xz + \dfrac{1}{2}xz \otimes z+ \dfrac{1}{2}xyz \otimes xz -\dfrac{1}{2}xyz \otimes z\\
%& + \dfrac{1}{2}yz \otimes yz + \dfrac{1}{2}yz \otimes xyz+ \dfrac{1}{2}yz \otimes yz -\dfrac{1}{2}z \otimes xyz\\
%& + \dfrac{1}{2}xyz \otimes xyz + \dfrac{1}{2}xyz \otimes yz+ \dfrac{1}{2}xz \otimes xyz -\dfrac{1}{2}xz \otimes yz)\\ 
%=\dfrac{1}{8}&\sum_{j=1}^{20}{{\Lambda_1}_j \otimes {\Lambda_2}_j}.
%\end{split} 
%\end{equation*}

%%%%%%%%%%%%%%%%%%%%%%%%%%

%%%%%%%%%%%%%%

\begin{center}
\begin{table}
\begin{tabular}{|c||c|c|c|c|c|}
\hline
&$\chi_1$&$\chi_2$&$\chi_3$&$\chi_4$&$\chi_5$\\
\hline
\hline
$\nu_2\left(\chi,\tau_1\right)=\nu_2(\chi)$&$1$&$1$&$1$&$1$&$1$\\
$\nu_2\left(\chi,\tau_2\right)$&$1$&$1$&$1$&$1$&$1$\\
$\nu_2\left(\chi,\tau_3\right)$&$1$&$1$&$0$&$0$&$1$\\
$\nu_2\left(\chi,\tau_4\right)$&$1$&$1$&$0$&$0$&$-1$\\
\hline
\end{tabular}
\vspace{3ex}
\caption{Twisted Frobenius--Schur indicators for $H_8$}
\label{tindstable}
\end{table}
\end{center}

\section{The regular representation}

We now return to the general case.  In this section, we realize the
twisted Frobenius--Schur indicators of the regular representation as
the trace of an explicit linear endomorphism of $H$.  Let $\chi_R$
denote the character of the left regular representation.

Let $\Omega^\tau_m: H \to H$ be the linear map defined by
$$\Omega^\tau_m(h)= \sum_{(h)}{S\left(\tau^{m-1}(h_1)\tau^{m-2}(h_2) \cdots \tau^2(h_{m-2})\tau(h_{m-1})\right)}.$$
\begin{theorem}\label{regular} The $m$-th twisted Frobenius--Schur
  indicator of the regular representation satisfies
 $$\tnu_m(\chi_R) = \tr(\Omega^\tau_m). $$
\end{theorem}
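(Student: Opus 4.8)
The plan is to compute $\tr(\Omega^\tau_m)$ directly by means of a general trace formula for endomorphisms of a semisimple Hopf algebra, and then to recognize the result as the defining character sum for $\tnu_m(\chi_R)$. The key tool is the fact that, since $H$ is semisimple, the element $e=\sum_{(\Lambda)}S(\Lambda_1)\otimes\Lambda_2$ is a separability idempotent: one has $\sum_{(\Lambda)}S(\Lambda_1)\Lambda_2=\varepsilon(\Lambda)1=1$, and $e$ is $H$-central. A standard computation (checking on the matrix blocks $M_d(k)$, where $\chi_R=d\cdot\tr$) then yields the integral trace formula
\begin{equation*}
\tr(f)=\sum_{(\Lambda)}\chi_R\bigl(S(\Lambda_1)\,f(\Lambda_2)\bigr),\qquad f\in\End_k(H),
\end{equation*}
which I would first record as a lemma.

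Next I would apply this to $f=\Omega^\tau_m$. Writing $\Delta^{m-1}(\Lambda)=\sum\Lambda_1\otimes\cdots\otimes\Lambda_m$ and feeding $\Lambda_2\otimes\cdots\otimes\Lambda_m$ into the definition of $\Omega^\tau_m$, one gets the value $S(\tau^{m-1}(\Lambda_2)\tau^{m-2}(\Lambda_3)\cdots\tau(\Lambda_m))$. Since $S$ is an anti-homomorphism, multiplying on the left by $S(\Lambda_1)$ gives $S\bigl(\tau^{m-1}(\Lambda_2)\cdots\tau(\Lambda_m)\,\Lambda_1\bigr)$, and using $\chi_R\circ S=\chi_R$ (valid for semisimple $H$, since $S$ permutes the simple modules while preserving their dimensions) I obtain
\begin{equation*}
\tr(\Omega^\tau_m)=\sum_{(\Lambda)}\chi_R\bigl(\tau^{m-1}(\Lambda_2)\,\tau^{m-2}(\Lambda_3)\cdots\tau(\Lambda_m)\,\Lambda_1\bigr).
\end{equation*}

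It then remains to match this ``decreasing'' arrangement of twists with the ``increasing'' one in the definition $\tnu_m(\chi_R)=\sum_{(\Lambda)}\chi_R(\Lambda_1\tau(\Lambda_2)\cdots\tau^{m-1}(\Lambda_m))$. Applying $S$ to the increasing product and using $S\tau=\tau S$ reverses the order of the factors; the antipode--integral identity $\sum_{(\Lambda)}\Lambda_1\otimes\cdots\otimes\Lambda_m=\sum_{(\Lambda)}S(\Lambda_m)\otimes\cdots\otimes S(\Lambda_1)$, which follows from $S(\Lambda)=\Lambda$ together with $\Delta^{m-1}\circ S=(S^{\otimes m})\circ(\text{reversal})\circ\Delta^{m-1}$, then converts the reversed antipode-laden product back into $\sum_{(\Lambda)}\tau^{m-1}(\Lambda_1)\tau^{m-2}(\Lambda_2)\cdots\tau(\Lambda_{m-1})\Lambda_m$. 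Finally, the cyclic invariance of $\Delta^{m-1}(\Lambda)$ from \eqref{eq:Lcycleinv} (equivalently Lemma~\ref{lem4.5}) shifts the legs $\Lambda_i\mapsto\Lambda_{i+1}$ and reproduces exactly the decreasing product displayed above, so that $\chi_R\circ S=\chi_R$ finishes the identification $\tnu_m(\chi_R)=\tr(\Omega^\tau_m)$.

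The main obstacle I anticipate is the first step: establishing the integral trace formula and, in particular, verifying that $\sum_{(\Lambda)}S(\Lambda_1)\otimes\Lambda_2$ is genuinely $H$-central. This is the one place requiring input beyond formal manipulation of the coproduct, and it is precisely where the antipode in the definition of $\Omega^\tau_m$ originates. The remaining bookkeeping --- the order reversals produced by $S$ and their cancellation against the antipode--integral identity and the cyclic invariance of $\Delta^{m-1}(\Lambda)$ --- is routine but must be tracked carefully; as a sanity check, at $m=2$ it already reduces to $\Omega^\tau_2=S\tau=T$, recovering the relation underlying Theorem~\ref{tfs2} and, for $H=k[G]$, the count of twisted norm-one group elements.
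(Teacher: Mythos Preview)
Your argument is correct and takes a genuinely different route from the paper's proof. The paper proceeds through Theorem~\ref{trace}: it first identifies $\tnu_m(\chi_R)$ with $\tr\bigl(\alpha|_{(\widetilde{H^{\otimes m}})^H}\bigr)$, then builds an auxiliary endomorphism $\psi$ of $H^{\otimes(m-1)}$, shows via an explicit conjugating isomorphism $\varphi:H^{\otimes(m-1)}\to (H^{\otimes m})^H$ that $\tr(\psi)=\tr(\alpha|_{(\widetilde{H^{\otimes m}})^H})$, and finally collapses $\tr(\psi)$ to $\tr(\Omega^\tau_m)$ by a direct dual-basis computation. Your approach bypasses Theorem~\ref{trace} entirely: you compute $\tr(\Omega^\tau_m)$ straight from the Frobenius ``reproducing'' identity $h=\sum_{(\Lambda)}\chi_R(S(\Lambda_1)h)\,\Lambda_2$, and then match the resulting character sum with the defining expression for $\tnu_m(\chi_R)$ using $\chi_R\circ S=\chi_R$, the antipode--integral identity, and cyclic invariance. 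This is shorter and more self-contained; the paper's route has the virtue of staying inside the framework already built for general $V$.

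One caution on your first step. The parenthetical ``checking on the matrix blocks $M_d(k)$'' is not by itself a proof, since $\Delta$ (and hence $\sum S(\Lambda_1)\otimes\Lambda_2$) does not respect the Wedderburn decomposition in any obvious way. The clean argument is the one you essentially indicate but should make explicit: for semisimple $H$ one has $S^2=\Id$, $S(\Lambda)=\Lambda$, $\chi_R$ is a two-sided integral in $H^*$ with $\chi_R(\Lambda)=1$, and the standard Larson--Sweedler dual-bases identity then gives $h=\sum_{(\Lambda)}\chi_R(S(\Lambda_1)h)\,\Lambda_2$ for all $h\in H$, from which your trace formula $\tr(f)=\sum_{(\Lambda)}\chi_R(S(\Lambda_1)f(\Lambda_2))$ follows immediately. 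With that lemma in hand, the remainder of your manipulation is correct as written.
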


We will need two lemmas.

\begin{lemma}\label{cdlemma1} For any $h^1,\dots,h^{m-1}\in H$, 
\begin{equation*}
%\label{eq:tcommdiag}
\begin{split}
  &\sum_{(\Lambda)}{\Lambda_1 h^1 \otimes \tau\left(\Lambda_2\right) h^2\otimes \cdots \otimes \tau^{m-2}(\Lambda_{m-1})h^{m-1}\otimes \tau^{m-1}(\Lambda_m)}\\
  &= \sum_{(\Lambda)}{\Lambda_1 \otimes \tau(\Lambda_2S(h^1_{m-1}))h^2
    \otimes \cdots \otimes \tau^{m-2}(\Lambda_{m-1}S(h^1_{2}))
    h^{m-1}\otimes \tau^{m-1}(\Lambda_mS(h^1_{1}))}.
\end{split}
\end{equation*}
\end{lemma}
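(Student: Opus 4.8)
The plan is to peel away the two inessential features of the statement and reduce everything to a single identity for the integral. First, the riders $h^2,\dots,h^{m-1}$ play no role: in each slot $i$ with $2\le i\le m-1$ the factor $h^i$ is multiplied on the far right, outside $\tau^{i-1}(\cdots)$, and it occupies exactly the same position on both sides, while slot $m$ carries no rider at all. So it suffices to prove the rider-free identity
\begin{equation}\tag{T}
\sum_{(\Lambda)}\Lambda_1 h\otimes\tau(\Lambda_2)\otimes\cdots\otimes\tau^{m-1}(\Lambda_m)=\sum_{(\Lambda)}\Lambda_1\otimes\tau(\Lambda_2 S(h_{m-1}))\otimes\cdots\otimes\tau^{m-1}(\Lambda_m S(h_1)),
\end{equation}
with $h=h^1$ and $\Delta^{m-2}(h)=\sum_{(h)}h_1\otimes\cdots\otimes h_{m-1}$; the full statement is then recovered by right-multiplying the $i$-th tensorand by $h^i$, a fixed linear map applied to two already equal sides. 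Next I would strip the twists: applying $\mathrm{id}\otimes\tau\otimes\cdots\otimes\tau^{m-1}$ to the untwisted identity
\begin{equation}\tag{U}
\sum_{(\Lambda)}\Lambda_1 h\otimes\Lambda_2\otimes\cdots\otimes\Lambda_m=\sum_{(\Lambda)}\Lambda_1\otimes\Lambda_2 S(h_{m-1})\otimes\cdots\otimes\Lambda_m S(h_1)
\end{equation}
reproduces (T) verbatim, because $\tau$ is an algebra homomorphism commuting with $S$, so $\tau^{i-1}$ carries $\Lambda_i S(h_{m-i+1})$ to $\tau^{i-1}(\Lambda_i S(h_{m-i+1}))$. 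Thus the lemma reduces to (U).

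The engine for (U) is the single-step move
\begin{equation}\tag{$\star$}
\sum_{(\Lambda)}\Lambda_1 h\otimes\Lambda_2=\sum_{(\Lambda)}\Lambda_1\otimes\Lambda_2 S(h),
\end{equation}
which is the case $m=2$. To obtain $(\star)$ I would use that $\Lambda$ is two-sided: expanding $\Delta(\Lambda h)=\Delta(\varepsilon(h)\Lambda)$ gives $\sum_{(\Lambda)}\Lambda_1 h_1\otimes\Lambda_2 h_2=\varepsilon(h)\Delta(\Lambda)$. I then evaluate $\sum\Lambda_1 h_1\otimes\Lambda_2 h_2 S(h_3)$, where $\Delta^2(h)=\sum h_1\otimes h_2\otimes h_3$, in two ways: grouping the last two factors of $\Delta^2(h)$ and applying the antipode axiom collapses the expression to the left-hand side $\sum\Lambda_1 h\otimes\Lambda_2$, whereas grouping the first two factors and using the displayed relation collapses it to the right-hand side $\sum\Lambda_1\otimes\Lambda_2 S(h)$. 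Equating the two evaluations yields $(\star)$.

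To prove (U) in general I would induct on $m$. For the step $m\to m+1$, I write $\Delta^{m}(\Lambda)=(\mathrm{id}^{\otimes(m-1)}\otimes\Delta)\Delta^{m-1}(\Lambda)$, apply the inductive hypothesis to the first $m$ slots so that $h$ is carried into the last of them as $\Lambda_m S(h_1)$, and then apply $\mathrm{id}^{\otimes(m-1)}\otimes\Delta$. Splitting that last tensorand uses the anti-comultiplicativity of $S$, namely $\Delta(S(h_1))=\sum S((h_1)_2)\otimes S((h_1)_1)$, so $\Lambda_m S(h_1)$ becomes $\Lambda_m S((h_1)_2)\otimes\Lambda_{m+1}S((h_1)_1)$. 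Writing $\Delta^{m-1}(h)=\sum\tilde h_1\otimes\cdots\otimes\tilde h_m=(\Delta\otimes\mathrm{id}^{\otimes(m-2)})\Delta^{m-2}(h)$, so that $\tilde h_1=(h_1)_1$, $\tilde h_2=(h_1)_2$, and $\tilde h_{j}=h_{j-1}$ for $j\ge 3$, this reversed pair is exactly the last two factors $\Lambda_m S(\tilde h_2)\otimes\Lambda_{m+1}S(\tilde h_1)$ demanded by (U) for $m+1$ slots, and the intermediate slots match because $h_{m-i+1}=\tilde h_{m-i+2}$ for $2\le i\le m-1$.

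The main obstacle is precisely this bookkeeping: one must check that the order reversal produced by the anti-comultiplicativity of $S$ meshes slot-by-slot with the reversed indexing of the $S(\tilde h_j)$ on the right of (U), and that the untouched middle slots are re-indexed consistently. Once the base case $(\star)$ and this matching are secured, everything else is formal manipulation in Sweedler notation.
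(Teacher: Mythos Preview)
Your argument is correct and follows essentially the same route as the paper: strip the riders $h^2,\dots,h^{m-1}$ and the twists to reduce to the untwisted identity (U), then derive (U) from the two-slot identity $(\star)$. The only differences are cosmetic---the paper cites \cite[Lemma~1.2(b)]{LarsonRadford:1988} for $(\star)$ rather than reproving it, and it obtains (U) in a single step by applying $\Id\otimes\Delta^{m-2}$ to $(\star)$ and expanding $\Delta^{m-2}(S(h^1))$ via the anti-comultiplicativity of $S$, which sidesteps your inductive bookkeeping.
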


\begin{proof} By \cite[Lemma 1.2(b)]{LarsonRadford:1988}, we have
\begin{equation*}\sum_{(\Lambda)}{\Lambda_1 h^1 \otimes \Lambda_2}= \sum_{(\Lambda)}{\Lambda_1 \otimes \Lambda_2S(h^1)}.
\end{equation*}
 Applying $\Id \otimes \Delta^{m-1}$ to both sides, we get
\begin{equation*}
\begin{split}
&\sum_{(\Lambda)}{\Lambda_1h^1 \otimes \Lambda_2 \otimes \cdots \otimes \Lambda_{m-1} \otimes \Lambda_m}\\
&=\sum_{(\Lambda)}{\Lambda_1 \otimes \Lambda_2S(h^1_{m-1})\otimes \Lambda_2S(h^1_{m-2})\otimes \cdots \otimes \Lambda_{m-1}S(h^1_2)\otimes \Lambda_mS(h^1_1)}.
\end{split}
\end{equation*}
We then apply $\Id \otimes \tau \otimes \tau^2 \otimes \cdots \otimes \tau^{m-1}$ to get
\begin{equation*}
\begin{split}
  &\sum_{(\Lambda)}{\Lambda_1h^1 \otimes \tau(\Lambda_2 )\otimes \cdots \tau^{m-2}(\Lambda_{m-1})\otimes \tau^{m-1}(\Lambda_m})\\
  &=\sum_{(\Lambda)}{\Lambda_1 \otimes
    \tau(\Lambda_2S(h^1_{m-1}))\otimes \cdots \otimes
    \tau^{m-2}(\Lambda_{m-1}S(h^1_2))\otimes \tau^{m-1}(\Lambda_m
    S(h^1_1))}.
\end{split}
\end{equation*}
The lemma follows by right multiplying this equation by $h^1 \otimes h^2 \otimes \cdots \otimes h^{m-1}\otimes 1$.
\end{proof}

Next, define a linear map $\psi:\widetilde{H}^{\otimes(m-1)}\rightarrow \widetilde{H}^{\otimes(m-1)}$ by
\begin{equation*}
\begin{split}
& \psi\left(h^1 \otimes h^2 \otimes \cdots \otimes h^{m-1}\right)\\&= \sum_{(h^1)}{\tau(S(h^1_{m-1}))h^2 \otimes \tau^2(S(h^1_{m-2}))h^3 \otimes \cdots \otimes \tau^{m-2}(S(h^1_2))h^{m-1}\otimes \tau^{m-1}(S(h^1_1))}.  
\end{split}
\end{equation*}

\begin{lemma}
$$\tr(\psi)=\tr\left(\alpha|_{\left(\widetilde{V^{\otimes
m}}\right)^H}\right).$$
\end{lemma}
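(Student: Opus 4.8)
The plan is to produce an explicit linear isomorphism that conjugates $\psi$ into $\alpha|_{(\widetilde{V^{\otimes m}})^H}$; since conjugate operators have the same trace, the lemma follows immediately. Here $V=H$ is the regular representation. I would define $\Phi\colon H^{\otimes(m-1)}\to(\widetilde{H^{\otimes m}})^H$ by $\Phi(w)=\Lambda\cdot(w\otimes 1)=\pi(w\otimes 1)$, where $\pi$ is the projection onto invariants used in Proposition~\ref{pirestriction}. Since $\Phi$ is $\pi$ applied to an element, its image lies automatically in $(\widetilde{H^{\otimes m}})^H$; moreover, spelling out $\pi$ for the regular representation shows that $\Phi(h^1\otimes\cdots\otimes h^{m-1})$ is precisely the left-hand side of Lemma~\ref{cdlemma1}.

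The first main step is the intertwining relation $\alpha\circ\Phi=\Phi\circ\psi$. I would prove it by rewriting $\Phi(h^1\otimes\cdots\otimes h^{m-1})$ via the right-hand side of Lemma~\ref{cdlemma1}, so that its first tensor factor is simply $\Lambda_1$ while the remaining factors carry the decorations $\tau^{j}(S(h^1_{m-j}))h^{j+1}$. Applying the cyclic shift $\alpha$ sends $\Lambda_1$ to the last slot and leaves exactly these decorations attached to $\tau(\Lambda_2),\dots,\tau^{m-1}(\Lambda_m)$; a direct comparison shows that these decorations are the very tensor components defining $\psi$. Hence $\alpha(\Phi(w))$ and $\Phi(\psi(w))$ differ only by a cyclic relabelling of the factors $\Lambda_1,\tau(\Lambda_2),\dots,\tau^{m-1}(\Lambda_m)$, and this relabelling is removed by applying the same factorwise right multiplications to both sides of Lemma~\ref{lem4.5}. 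As that lemma is an identity in $H^{\otimes m}$, it is preserved under any fixed linear map, giving $\alpha\circ\Phi=\Phi\circ\psi$.

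The second and, I expect, harder step is to show that $\Phi$ is a linear isomorphism. For the dimension count I would note that for any automorphism $\sigma$ the map $\sigma$ itself is an $H$-module isomorphism $H\to\F_\sigma(H)$, so $\widetilde{H^{\otimes m}}\cong H^{\otimes m}$ with the ordinary diagonal action; since tensoring any module with the regular representation yields a multiple of the regular representation, the trivial module occurs in $\widetilde{H^{\otimes m}}$ with multiplicity $(\dim H)^{m-1}$, whence $\dim(\widetilde{H^{\otimes m}})^H=(\dim H)^{m-1}=\dim H^{\otimes(m-1)}$. It then suffices to prove that $\Phi$ is injective. This (equivalently, surjectivity) is the main obstacle: in the group-algebra case it is transparent, since for basis group elements the various $\Phi(a_1\otimes\cdots\otimes a_{m-1})$ are supported on disjoint sets of basis tensors and hence are independent, but in general I would either reduce to the untwisted diagonal case through the trivialization $\Theta=\Id\otimes\tau^{-1}\otimes\cdots\otimes\tau^{-(m-1)}$, an $H$-isomorphism $\widetilde{H^{\otimes m}}\to H^{\otimes m}$ carrying $\Phi$ (up to an invertible reindexing of $H^{\otimes(m-1)}$) to $w\mapsto\pi(w\otimes 1)$ for the diagonal action, or prove surjectivity directly via a Larson--Radford-type identity transferring the last tensor factor onto the earlier ones through the antipode, in parallel with Lemma~\ref{cdlemma1}.

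Granting that $\Phi$ is an isomorphism, the intertwining relation gives $\psi=\Phi^{-1}\circ\bigl(\alpha|_{(\widetilde{V^{\otimes m}})^H}\bigr)\circ\Phi$, and therefore $\tr(\psi)=\tr\bigl(\alpha|_{(\widetilde{V^{\otimes m}})^H}\bigr)$, as desired.
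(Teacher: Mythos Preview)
Your approach is essentially the paper's: build a linear isomorphism conjugating $\psi$ into $\alpha|_{(\widetilde{H^{\otimes m}})^H}$ and verify the intertwining relation using Lemmas~\ref{lem4.5} and~\ref{cdlemma1}. The difference is where you insert the $1$. You set $\Phi(w)=\pi(w\otimes 1)$, putting $1$ in the \emph{last} (twisted) slot; the paper instead defines $\varphi(w)=\pi(1\otimes w)$, putting $1$ in the \emph{first} slot, which is the untwisted copy of $H$. With that choice, $\widetilde{H^{\otimes m}}=H\otimes W$ for $W=\widetilde{H}^{\otimes(m-1)}$, and $\varphi$ is literally the standard linear isomorphism $W\to (H\otimes W)^H$, $w\mapsto\sum_{(\Lambda)}\Lambda_1\otimes\Lambda_2 w$, which the paper simply recalls. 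This eliminates exactly the obstacle you flag: there is nothing further to prove about bijectivity. The intertwining calculation then runs just as yours does, applying Lemma~\ref{lem4.5} and Lemma~\ref{cdlemma1} (in the opposite order from your sketch, but to the same effect).

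Your map $\Phi$ is also an isomorphism, and your $\Theta$-reduction would establish it: after untwisting the last factor via the $H$-isomorphism $\tau\colon\F_{\tau^{m-1}}(H)\to H$, your $\Phi$ becomes $u\mapsto\sum_{(\Lambda)}\Lambda_1\cdot u\otimes\Lambda_2$, the ``right'' analogue of the standard isomorphism (invertible because $u\otimes a\mapsto\sum a_1 u\otimes a_2$ is an $H$-isomorphism $U_{\mathrm{triv}}\otimes H\to U\otimes H$). So there is no error in your plan; the paper's choice is simply a cleaner packaging that lets it cite the isomorphism rather than prove it.
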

\begin{proof} To prove the lemma, it suffices to find a linear
isomorphism $$\varphi:\widetilde{H}^{\otimes (m-1)}\to\left(H \otimes
\widetilde{H}^{\otimes (m-1)}\right)^H$$ making the diagram
\[
\begin{diagram}
\node{\widetilde{H}^{\otimes (m-1)}}\arrow{s,l}{\varphi} \arrow{e,t}{\psi}
\node{\widetilde{H}^{\otimes (m-1)}} \arrow{s,r}{\varphi}\\
\node{\left(H \otimes \widetilde{H}^{\otimes (m-1)}\right)^H} \arrow{e,b}{\alpha} \node{\left(H \otimes \widetilde{H}^{\otimes (m-1)}\right)^H}
\end{diagram}
\]
commute.
Recall that for any $H$-module $W$, there is a linear isomorphism
$W\to(H\otimes W)^H$ given by $w \mapsto \sum_{(\Lambda)}{\Lambda_1
\otimes \Lambda_2 w}$.  Let $\varphi$ be this isomorphism for $W= \widetilde{H}^{\otimes(m-1)}$.

%Let $V$ be any $H$-module and let
%$\varphi: V \stackrel{\sim}{\longrightarrow} \left(H \otimes V\right)^H$ be given by $ v \mapsto \sum{(\Lambda)}{\Lambda_1 \otimes \Lambda_2v}.$ If we choose $V= \widetilde{H}^{\otimes(m-1)},$ then

% $$\varphi\left(h^1 \otimes h^2 \otimes \cdots \otimes h^{m-1}\right)= \sum_{(\Lambda)}{\Lambda_1 \otimes \tau(\Lambda_2)h^1 \otimes \tau^2(\Lambda_3)h^2 \cdots \otimes \tau^{m-1}(\Lambda_m)h^{m-1}}.$$  Furthermore, let $\psi:\widetilde{H}^{\otimes(m-1)}\rightarrow \widetilde{H}^{\otimes(m-1)}$ be the map given by
%\begin{equation*}
%\begin{split}
%& \psi\left(h^1 \otimes h^2 \otimes \cdots \otimes h^{m-1}\right)\\&= \sum_{(h^1)}{\tau(S(h^1_{m-1}))h^2 \otimes \tau^2(S(h^1_{m-2}))h^3 \otimes \cdots \otimes \tau^{m-2}(S(h^1_2))h^{m-1}\otimes \tau^{m-1}(S(h^1_1))}.  
%\end{split}
%\end{equation*}

%We claim that  the diagram
%\[
%\begin{diagram}
%\node{\widetilde{H}^{\otimes (m-1)}}\arrow{s,l}{\varphi} \arrow{e,t}{\psi}
%\node{\widetilde{H}^{\otimes (m-1)}} \arrow{s,r}{\varphi}\\
%\node{\left(H \otimes \widetilde{H}^{\otimes (m-1)}\right)^H} \arrow{e,b}{\alpha} \node{\left(H \otimes \widetilde{H}^{\otimes (m-1)}\right)}
%\end{diagram}
%\]

% commutes where $\alpha$ is given by $$\alpha\left(v_1 \otimes v_2 \otimes \cdots \otimes v_m\right)= v_2 \otimes \cdots \otimes v_m \otimes v_1. $$
Calculating gives
\begin{equation*}
%\label{eq:blah}
\begin{split}
&\left(\alpha \circ \varphi\right)\left(h^1 \otimes h^2 \otimes \cdots \otimes h^{m-1}\right)\\
&= \sum_{(\Lambda)}{ \tau(\Lambda_2)h^1 \otimes \tau^2(\Lambda_3)h^2 \cdots \otimes \tau^{m-1}(\Lambda_m)h^{m-1}\otimes \Lambda_1}\\
%&= \sum_{(\Lambda)}{ \tau(\Lambda_1)h^1 \otimes \tau^2(\Lambda_2)h^2 \cdots \otimes \tau^{m-1}(\Lambda_{m-1})h^{m-1}\otimes \Lambda_m}\\
&= \sum_{(\Lambda)}{\Lambda_1h^1 \otimes \tau(\Lambda_2)h^2 \cdots \otimes \tau^{m-2}(\Lambda_{m-1})h^{m-1}\otimes \tau^{m-1}(\Lambda_m)}\\
&=\sum_{(\Lambda)}{\Lambda_1 \otimes \tau(\Lambda_2S(h^1_{m-1}))h^2 \otimes  %\tau^2\left(\Lambda_3S(h^1_{m-2})\right)h^3  \otimes
 \cdots \otimes \tau^{m-2}(\Lambda_{m-1}S(h^1_{2})) h^{m-1}\otimes \tau^{m-1}(\Lambda_mS(h^1_{1}))}\\ 
&=\sum_{(\Lambda)}\Lambda_1 \otimes \tau(\Lambda_2)\tau(S(h^1_{m-1}))h^2 \otimes  %\tau^2\left(\Lambda_3S(h^1_{m-2})\right)h^3  \otimes
 \cdots \otimes\\
&\qquad\qquad\qquad\qquad\tau^{m-2}(\Lambda_{m-1})\tau^{m-2}(S(h^1_{2})) h^{m-1}\otimes \tau^{m-1}(\Lambda_m)\tau^{m-1}(S(h^1_{1}))\\
 &= \left(\varphi \circ \psi\right) \left(h^1 \otimes h^2 \otimes \cdots \otimes h^{m-1}\right).
\end{split}
%\label{eq:alphaphi}
\end{equation*}
Here, the second and third equalities use Lemmas~\ref{lem4.5} and
\ref{cdlemma1} respectively.
\end{proof}

\begin{proof}[Proof of Theorem~\ref{regular}]
By the previous lemma, we need only show that
$\tr(\psi)=\tr(\Omega^\tau_m).$ Choose a basis $b^1,
\cdots b^n \in H$ with dual basis $b_1^*, \cdots b_n^* \in
H^*$. Writing out $\tr(\psi)$ in terms of the induced basis on
$H^{\otimes m}$, we obtain

\begin{equation*}
\begin{split}
 \tr\left(\psi\right)&= \sum_{i_1, \cdots, i_{m-1}=1}^{n}{\left\langle
 b_{i_1}^* \otimes \cdots \otimes b_{i_{m-1}}^*,
 \psi\left(b^{i_1}\otimes \cdots \otimes b^{i_{m-1}}\right)
 \right\rangle}\\ 
& \begin{multlined}= \sum_{i_1, \cdots,
 i_{m-1}=1}^{n}{b_{i_1}^*(\tau(S(b^{i_1}_{m-1}))b^{i_2} )
 b_{i_2}^*(\tau^2(S(b^{i_1}_{m-2}))b^{i_3})}\cdots\\
 \qquad b_{i_{m-2}}^*(\tau^{m-2}(S(b^{i_1}_2))b^{i_{m-1}})b_{i_m-1}^*(\tau^{m-1}(S(b^{i_1}_1)))\end{multlined}\\
 &\begin{multlined} = \sum_{i_1, \cdots,
 i_{m-2}=1}^{n}{b_{i_1}^*(\tau(S(b^{i_1}_{m-1}))b^{i_2} )
 b_{i_2}^*(\tau^2(S(b^{i_1}_{m-2}))b^{i_3})} \cdots\\ 
 b_{i_{m-2}}^*\left(\tau^{m-2}(S(b^{i_1}_2))\tau^{m-1}(S(b^{i_1}_1))\right)\end{multlined}\\
 & =\cdots=
 \sum_{i_1=1}^{n}{b_{i_1}^*\left(\tau(S(b^{i_1}_{m-1}))
 \tau^2(S(b^{i_1}_{m-2})) \cdots
 \tau^{m-2}(S(b^{i_1}_2))\tau^{m-1}(S(b^{i_1}_1))\right)}\\ & =
 \sum_{i=1}^{n}{b_{i}^*\left(\tau(S(b^{i}_{m-1}))
 \tau^2(S(b^{i}_{m-2})) \cdots
 \tau^{m-2}(S(b^{i}_2))\tau^{m-1}(S(b^{i}_1))\right)}\\ & =
 \sum_{i=1}^{n}{b_{i}^*\left(S(\tau(b^{i}_{m-1}))
 S(\tau^2(b^{i}_{m-2})) \cdots
 S(\tau^{m-2}(b^{i}_2))S(\tau^{m-1}(b^{i}_1))\right)}\\ & =
 \sum_{i=1}^{n}{b_{i}^*\left(S(\tau^{m-1}(b^{i}_1)\tau^{m-2}(b^{i}_2))\cdots\tau^2(b^{i}_{m-2})\tau(b^{i}_{m-1})
 \right)}\\ & = \tr\left(\Omega^\tau_m\right),
\end{split}
\end{equation*}
as desired.
\end{proof}

\begin{example} 
We revisit the Hopf algebra $H_8$ described in Section
\ref{tFSH8section}.  The linear maps $\Omega^{\tau}_2$ from
Theorem~\ref{regular} are given in Table~\ref{OmegatableforH8}.
Computing the traces, one obtains the twisted Frobenius--Schur
indicators for the regular representation:  $\nu_2(\chi_R,\tau_1)=6$,
$\nu_2(\chi_R,\tau_2)=6$, $\nu_2(\chi_R,\tau_3)= 4$, and
$\nu_2(\chi_R,\tau_4)= 0$.  These can, of course, also be calculated
from the information in Table~\ref{tindstable}. 

\begin{center}
\begin{table}
\begin{tabular}{|c||c|c|c|c|}
\hline
& $\Omega^{\tau_1}_2$ & $\Omega^{\tau_2}_2$ & $\Omega^{\tau_3}_2$ &
$\Omega^{\tau_4}_2$\\
\hline
\hline
$1$  & $1$   & $1$  & $1$                                   & $1$\\
$x$  & $x$   & $x$  & $y$                                   & $y$\\
$y$  & $y$   & $y$  & $x$                                   & $x$\\
$xy$ & $xy$  & $xy$ & $xy$                                  & $xy$\\
$z$  & $z$   & $xyz$& $\frac{1}{2}\left(z+xz+yz-xyz\right)$ & $\frac{1}{2}\left(-z+xz+yz+xyz\right)$\\
$xz$ & $yz$  & $xz$ & $\frac{1}{2}\left(z+xz-yz+xyz\right)$ & $\frac{1}{2}\left(z-xz+yz+xyz\right)$\\
$yz$ & $xz$  & $yz$ & $\frac{1}{2}\left(z-xz+yz+xyz\right)$ & $\frac{1}{2}\left(z+xz-yz+xyz\right)$\\
$xyz$& $xyz$ & $z$  & $\frac{1}{2}\left(-z+xz+yz+xyz\right)$& $\frac{1}{2}\left(z+xz+yz-xyz\right)$\\
\hline
\end{tabular}
\vspace{3ex}
\caption{The linear maps $\Omega^{\tau}_2$ for $H_8$}
\label{OmegatableforH8}
\end{table}
\end{center}

\end{example}
\bibliographystyle{amsalpha}	
\bibliography{myrefs}		% expects file "myrefs.bib"

\end{document}